\apptocmd{\thebibliography}{\raggedright}{}{}
\newtheoremstyle{one}
{11pt}
{11pt}
{\it}
{}
{\bf}
{.}
{1mm}
{}
\newtheoremstyle{two}
{11pt}
{11pt}
{}
{}
{\bf}
{.}
{1mm}
{}
\theoremstyle{one}
\newtheorem{theorem}{Theorem}[section]
\newtheorem{lemma}[theorem]{Lemma}
\newtheorem*{lemme}{Lemme}
\newtheorem*{theoreme}{Th\'eor\`eme}
\newtheorem{proposition}[theorem]{Proposition}
\theoremstyle{two}
\newtheorem{definition}[theorem]{Definition}
\newtheorem{example}[theorem]{Example}
\newtheorem{notation}[theorem]{Notation}
\newtheorem{remark}[theorem]{Remark}
\newtheorem*{prooffamilymultisections}{\textbf{Proof of Proposition \ref{multisection}}}
\newtheorem*{proofofmainthm}{\textbf{Proof of Theorem \ref{thm1}}}
\def\Q{\mathbb{Q}}
\def\P{\mathbb{P}}
\title{Density of rational points on a family of del Pezzo surfaces of degree one}
\author{Julie Desjardins and Rosa Winter \\

(With an appendix by Jean-Louis Colliot-Th\'{e}l\`{e}ne)}
\date{}
\begin{document}
\maketitle

\begin{center}\textbf{Keywords:} Rational points, del Pezzo surfaces, elliptic fibrations \end{center}

\abstract{Let $k$ be an infinite field of characteristic 0, and $X$ a del Pezzo surface of degree $d$ with at least one $k$-rational point. Various methods from algebraic geometry and arithmetic statistics have shown the Zariski density of the set $X(k)$ of $k$-rational points in $X$ for $d\geq2$ (under an extra condition for $d=2$), but fail to work in generality when the degree of $X$ is 1, leaving a large class of del Pezzo surfaces for which the question of density of rational points is still open. In this paper, we prove the Zariski density of $X(k)$ when $X$ has degree 1 and is represented in the weighted projective space $\mathbb{P}(2,3,1,1)$ with coordinates $x,y,z,w$ by an equation of the form $y^2=x^3+az^6+bz^3w^3+cw^6$ for $a,b,c\in k$ with $a,c$ non-zero, under the condition that the elliptic surface obtained by blowing up the base point of the anticanonical linear system $|{-K}_X|$ contains a smooth fiber above a point in $\mathbb{P}^1\setminus\{(1:0),(0:1)\}$ with positive rank over $k$. When $k$ is of finite type over $\mathbb{Q}$, this condition is sufficient \textsl{and} necessary.  
}
\setcounter{tocdepth}{2}

\section{Introduction}

A del Pezzo surface over a field $k$ is a smooth, projective, geometrically integral surface over $k$ with ample anticanonical divisor. The degree of a del Pezzo surface is the self-intersection number of the canonical divisor, and this is an integer between 1 and 9.
Over an algebraically closed field, a del Pezzo surface of degree $d$ is isomorphic to either $\mathbb{P}^1\times\mathbb{P}^1$ (for $d=8$), or to $\mathbb{P}^2$ blown up in $9-d$ points in general position. 
Over a non-algebraically closed field, this is not true in general. A variety $X$ over a field $k$ is \textsl{$k$-unirational} if there is a dominant rational map $\mathbb{P}_k^n\dashrightarrow X$ for some $n$. Del Pezzo surfaces of degree at least 2 over a field $k$ with a $k$-rational point are known to be $k$-unirational under the extra condition for degree 2 that the $k$-rational point lies outside the ramification curve of the anticanonical map, and is not contained in the intersection of 4 exceptional curves. This is proved consecutively in \cite{Se43} and \cite{Se51} for degree 3 and $k=\mathbb{Q}$, in \cite[Theorems 29.4, 30.1]{Ma86} for $d\geq5$, as well as for $d=3,4$ for large enough cardinality of $k$, in \cite{Ko02} for the complete case $d=3$, in \cite[Proposition 5.19]{Pie} for the complete case $d=4$, and in \cite{STVA} for $d=2$. A del Pezzo surface is minimal if and only if there exists no birational map over its groundfield to a del Pezzo surface of higher degree. Therefore, if a del Pezzo surface $X$ of degree 1 over a field $k$ is not minimal, it is birationally equivalent to a del Pezzo surface $X'$ of higher degree, and $X$ is unirational if and only if $X'$ is. A minimal del Pezzo surface of degree 1 has Picard rank 1 or 2. \\
For a long time, nothing about unirationality for minimal del Pezzo surfaces of degree 1 was known, even though they always contain a rational point. In 2017, Koll\'{a}r and Mella proved that minimal del Pezzo surfaces of degree 1 over a field $k$ with char $k\neq2$ that have Picard rank 2 are $k$-unirational \cite{KM17}. Outside this case, the question of $k$-unirationality for minimal del Pezzo surfaces of degree~1 is wide open: we do not have any example of a minimal del Pezzo surface of degree 1 with Picard rank 1 that is known to be $k$-unirational, nor of one that is known not to be $k$-unirational. \\
If $k$ is infinite, then $k$-unirationality implies density of the set of $k$-rational points. While unirationality for del Pezzo surfaces of degree 1 is still out of reach, there are several partial results on density of their set of rational points; see Remark \ref{previouswork}. Moreover, if $k$ is a number field, density of the set of $k$-rational points for del Pezzo surfaces of degree 1 is implied by a conjecture of Colliot-Th\'{e}l\`{e}ne and Sansuc, stating that for a geometrically rational variety over a number field, its set of rational points is dense in the Brauer--Manin set for the adelic topology~\cite[Question $(\textbf{j}_1)$]{CTS}.\\
In this paper we give sufficient conditions for del Pezzo surfaces of degree~1 in a certain family over an infinite field of characteristic 0 to have a dense set of rational points. The conditions are necessary if the field is of finite type over $\mathbb{Q}$. 

\subsection{Main result}
A del Pezzo surface of degree 1 over a field $k$ can be described by a smooth sextic in the weighted projective space $\mathbb{P}(2,3,1,1)$ with coordinates $(x:y:z:w)$. For char $k\neq2,3$, this sextic can be written as \begin{equation}\label{eqDP1}y^2=x^3+x\cdot f(z,w)+g(z,w),
\end{equation} where $f,g\in k[z,w]$ are homogeneous of degree 4 and 6, respectively. For a del Pezzo surface $X$ of degree 1, the anticanonical linear system $|{-K}_X|$ has a unique base point given by $\mathcal{O}=(1:1:0:0)$. Blowing up this basepoint gives a surface $\mathcal{E}$ with elliptic fibration $\mathcal{E}\longrightarrow\mathbb{P}^1$, which, when restricted to $S$, is given  by the projection to $(z:w)$. The fibration admits a section $\tilde{\mathcal{O}}$ given by the exceptional curve above $\mathcal{O}$. 

In this paper we prove the following theorem.

\begin{theorem}\label{thm1}
Let $k$ be a field of characteristic 0, and $a,b,c\in k$ with $a,c$ non-zero. Let $S$ be the del Pezzo surface given by \begin{equation}\label{eqdp1thm}y^2=x^3+az^6+bz^3w^3+cw^6
\end{equation} in the weighted projective space $\mathbb{P}(2,3,1,1)$ with coordinates $(x,y,z,w)$. Let $\mathcal{E}$ be the elliptic surface obtained by blowing up the base point of the linear system $|{-K}_S|$. If $S$ contains a rational point with non-zero $z,w$-coordinates, such that the corresponding point on~$\mathcal{E}$ is non-torsion on its fiber, then $S(k)$ is dense in $S$ with respect to the Zariski topology. If $k$ is of finite type over $\mathbb{Q}$, the converse holds as well.
\end{theorem}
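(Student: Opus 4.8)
The plan is to reduce to a finitely generated field, prove the forward implication by producing one auxiliary curve through $P$ and invoking Silverman's specialization theorem, and prove the converse by a finiteness argument for torsion.

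First I would replace $k$ by the subfield $k_0=\mathbb Q(a,b,c,x_0,y_0,z_0,w_0)$ generated by the coefficients and the coordinates of the hypothesised point $P=(x_0:y_0:z_0:w_0)$: it is finitely generated over $\mathbb Q$, still infinite, still carries $P$ with the same (geometric) non-torsion property, and $\overline{S(k_0)}\subseteq\overline{S(k)}$, so density over $k_0$ suffices. Hence assume $k$ is finitely generated over $\mathbb Q$. Write $d(z,w)=az^6+bz^3w^3+cw^6$, so the fibre of $\mathcal E\to\mathbb P^1$ over $(z:w)$ is $E_{(z:w)}\colon y^2=x^3+d(z,w)$ with origin $\widetilde{\mathcal O}$; then $P$ lies on $E_{t_0}$ for some $t_0\in\mathbb P^1(k)\setminus\{(1:0),(0:1)\}$, non-torsion there, and $d_0:=y_0^2-x_0^3=d(z_0,w_0)\in k^\times$.

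For the forward direction I would aim to exhibit an irreducible curve $\Gamma\subset S$ over $k$ through $P$, not contained in a fibre, with $\Gamma(k)$ infinite. Given such a $\Gamma$, it is a multisection of $\mathcal E\to\mathbb P^1$, and over $k(\Gamma)$ the base change $\mathcal E\times_{\mathbb P^1}\Gamma\to\Gamma$ carries a tautological point $\mathcal P_\Gamma$ whose specialization at the place of $\Gamma$ corresponding to $P$ is $P$, hence non-torsion; so $\mathcal P_\Gamma$ is non-torsion over $k(\Gamma)$, and Silverman's specialization theorem makes its specialization at $q\in\Gamma(k)$ non-torsion for all $q$ outside a set of bounded height. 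As $\Gamma(k)$ is infinite (so of unbounded height: a genus $\ge 2$ curve would have finite $\Gamma(k)$ by Faltings) and $\Gamma\to\mathbb P^1$ is finite, infinitely many fibres thereby acquire a non-torsion $k$-point, hence a dense set of $k$-points; since a proper closed subset of the surface $S$ cannot contain a one-parameter family of entire fibres, $S(k)$ is dense. Constructing $\Gamma$ is the crux. The obvious candidate, the rescaling locus $\{(\lambda^2x_0:\lambda^3y_0:z:w):d(z,w)=d_0\lambda^6\}$ of fibres $k$-isomorphic to $E_{t_0}$, passes through $P$ and dominates $\mathbb P^1$ but is a genus-$10$ plane sextic (for $b^2\ne 4ac$), so carries only finitely many $k$-points by Faltings. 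To fix this I would exploit the shape $d(z,w)=a(z^3)^2+b(z^3)(w^3)+c(w^3)^2$: the conic $\mathcal C\colon au^2+buv+cv^2=d_0s^2$ has the $k$-point $(z_0^3:w_0^3:1)$ read off from $P$, hence $\mathcal C\cong\mathbb P^1$ over $k$; realising $S$ as the double cover of the quadric cone $\mathbb P(1,1,2)$ defined by $y^2=x^3+d(z,w)$ and transporting the pencil cut out by $\mathcal C$ to $S$, one obtains a family of curves through $P$, and the real work is to arrange the parametrisation so that the member through $P$ is a genuine multisection of geometric genus $\le 1$ — and, if genus $1$, of positive rank over $k$, which one secures by making $P$ a non-torsion point on it via an identification with $E_{t_0}$. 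The hypotheses $a,c\ne 0$ and $t_0\notin\{(1:0),(0:1)\}$ (i.e.\ that $P$ misses the two fibres fixed by the order-$3$ automorphism $(z:w)\mapsto(\zeta_3 z:w)$) are precisely what lets this go through, and I expect essentially all the difficulty to lie here.

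For the converse (with $k$ finitely generated over $\mathbb Q$): if $S(k)$ is dense it meets infinitely many fibres of $\mathcal E\to\mathbb P^1$, since a finite union of fibres is a proper closed subset. For each $n\ge 2$ the non-identity $n$-torsion $\mathcal E[n]\setminus\widetilde{\mathcal O}$ is a union of curves of geometric genus $\ge 2$ (for instance $\mathcal E[2]\setminus\widetilde{\mathcal O}=\{y=0,\ x^3+d(z,w)=0\}$ has genus $4$), so by Faltings it has finitely many $k$-points; combined with the uniform boundedness of torsion of elliptic curves over the fixed finitely generated field $k$, only finitely many fibres $E_t$ $(t\in\mathbb P^1(k))$ have nontrivial $k$-rational torsion. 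Hence some $E_{t_0}$ with $t_0\notin\{(1:0),(0:1)\}$ has a $k$-point, necessarily non-torsion, and the corresponding point of $S$ has non-zero $z,w$-coordinates, as required.
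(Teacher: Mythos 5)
Your reduction to a finitely generated base field and your converse argument are sound (though the converse can be streamlined: the paper does not need Faltings on the torsion loci — it is enough that each $T_m$ is a curve, so $\bigcup_{m\le N}T_m$ together with the finitely many bad fibers is a proper closed subset, and density of $S(k)$ immediately produces a point $P$ outside it). The uniform torsion bound over a fixed finitely generated field that you invoke is exactly what Colliot-Th\'el\`ene's appendix to the paper supplies.

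The forward direction, however, has a genuine gap, and you in fact flag it yourself: ``the real work is to arrange the parametrisation so that the member through $P$ is a genuine multisection of geometric genus $\le 1$ --- and, if genus $1$, of positive rank over $k$ \dots and I expect essentially all the difficulty to lie here.'' This is the entire content of the theorem, and it is not carried out. Two separate problems arise. First, the construction of $\Gamma$: passing from the conic $au^2+buv+cv^2=d_0s^2$ back to $S$ does not produce a low-genus multisection, because the substitution $u=z^3$, $v=w^3$ is degree $9$ on the base and sends a conic to a high-genus curve --- this is exactly the genus-$10$ obstruction you already noticed for the rescaling locus, and the double-cover picture does not remove it. The paper gets around this quite differently: it takes the subsystem of $|{-3K}_S|$ of curves singular at $R$, and observes that the order-$3$ automorphism $\sigma\colon (x:y:z:w)\mapsto(x:y:z:\zeta_3^2 w)$ preserves each such curve and hence forces singularities also at $\sigma(R)$ and $\sigma^2(R)$; three double points on an arithmetic-genus-$4$ curve in $|{-3K}_{\mathcal E}+3\tilde{\mathcal O}|$ give geometric genus $\le 1$. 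That explicit curve $C_R$ (Definition~\ref{defncurve}) is what your $\Gamma$ would have to be, and no substitute for it appears in your proposal.

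Second, and more seriously, even granting a genus-$1$ multisection $\Gamma$ through $P$, nothing in your argument gives $\Gamma$ positive rank over $k$. Your sentence ``which one secures by making $P$ a non-torsion point on it via an identification with $E_{t_0}$'' is not a proof: $P$ is a (singular) point of $\Gamma$, $\Gamma$ and $E_{t_0}$ are different elliptic curves, and there is no identification between them --- $P$ being non-torsion on $E_{t_0}$ says nothing about the order of any point on $\Gamma$. The paper deliberately avoids trying to prove positive rank for any individual $C_R$. Instead (Section~\ref{familyofmultisectionsnontorsion}) it lets $R$ vary over the fiber $\mathcal F$ through $P$, packages the normalizations of the $C_R$ into an elliptic surface $\mathcal C\to\mathcal F$ with generic fiber $E_\eta$, and exhibits a section $D$ of that new fibration (the sum $\sigma(Q)+\sigma^2(Q)$ of the two conjugate third intersection points) whose infinite order is verified by specializing to the fiber at infinity and landing on a non-torsion point of an additive group (Proposition~\ref{positiverank}). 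Density of $\mathcal C(k)$ then follows from Remark~\ref{sectionimpliesdensity}, and $\mathcal C\to\mathcal E\to S$ dominates. This two-level fibration argument --- not a single multisection with Silverman specialization --- is the key mechanism of the proof, and it is entirely absent from your proposal.
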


\begin{remark}
Theorem \ref{thm1} is the first result that gives sufficient \textsl{and} necessary conditions for the $k$-rational points on the family given by (\ref{eqdp1thm}) to be dense, even when $b=0$, where $k$ is any field of finite type over $\mathbb{Q}$; see also Remark~\ref{previouswork}. We require $k$ to be of finite type over $\mathbb{Q}$ in order to bound the torsion in a family of elliptic curves over $k$; see also Theorem \ref{CT}.  
\end{remark}

\begin{remark}\label{previouswork}
Several partial results on density of rational points on a del Pezzo surface of degree~1 are known. In \cite{VA11}, V\'{a}rilly-Alvarado proves Zariski density of the set of $\mathbb{Q}$-rational points on all surfaces of the form (\ref{eqDP1}) with $f=0$ and $g=az^6+bw^6$ for non-zero $a,b\in\mathbb{Z}$, such that either $3a/b$ is not a square, or gcd($a,b)=1$ and $9\nmid ab$, under the condition that the Tate--Shafarevich group of elliptic curves with $j$-invariant 0 is finite. Ulas and Togb\'e prove Zariski density of the set of $\mathbb{Q}$-rational points of surfaces of the from (\ref{eqDP1}) in the following cases. (i) either $g=0$ and deg$(f(z,1))\leq3$, or $g=0$ and deg$(f(z,1))=4$ with $f$ not even, or $f=0$ and $g(z,1)$ is monic of degree 6 and not even \cite[Theorems 2.1 (1), 2.2, and 3.1]{U07}. (ii) $g=0$ and deg$(f(z,1))=4$, or $f=0$ and $g(z,1)$ is even and monic of degree 6, both cases under the condition that there is a fiber of $\mathcal{E}$ with infinitely many rational points \cite[Theorems 2.1 (2) and 3.2]{U07}. (iii)
The surface can be defined by $y^2=x^3-h(z,w)$, with $h(z,1)=z^5+az^3+bz^2+cz+d\in\mathbb{Z}[z]$, and the set of rational points on the elliptic curve $Y^2=X^3+135(2a-15)X-1350(5a+2b-26)$ is infinite \cite[Theorem 2.1]{U08}. (iv) $f(z,1)$ and $g(z,1)$ are both even of degree 4 and there is a fiber of $\mathcal{E}$ with infinitely many rational points \cite[Theorem 2.1]{UT10}. Jabara generalized the results from \cite{U07} mentioned above in \cite[Theorems C and D]{J12}. Though the proofs of these two theorems are incomplete (see \cite[Remark 2.7]{SL14}), they hold for sufficiently general cases. Finally, in \cite{SL14}, Salgado and van Luijk generalize some of the previous results, proving Zariski density of the set of $k$-rational points of surfaces of the form (\ref{eqDP1}) for any infinite field $k$ with char $k\neq2,3$, assuming that there exists a point $Q$ on a smooth fiber of $\mathcal{E}$ satisfying several conditions, among which that a multisection that they construct from $Q$ has infinitely many $k$-rational points. 
\end{remark}

\begin{remark}
For an elliptic surface, the Zariski density of the set of rational points is equivalent to having infinitely many fibers with non-zero rank. Given the difficulty in calculating the rank in generality, a reasonable substitute when $k=\Q$ is the root number $W(E)$, defined as the parity of the analytic rank. A consequence of the Birch and Swinnerton-Dyer conjecture (known as the Parity conjecture) relates the root number to the parity of the geometric rank by $W(E)=(-1)^{r(E)}$, reducing the question of Zariski density on elliptic surfaces to finding an infinite set of fibers with root number $-1$, which would imply odd and thus non-zero rank. Work in this direction includes papers from Manduchi \cite{Manduchi}, Helfgott \cite{Helfgott} and V\'{a}rilly-Alvarado (mentioned in the previous remark). The latest result in the literature \cite{Desjardins1}, by the first author, proves that for non-isotrivial elliptic surfaces, the set of fibers with negative root number is infinite, assuming the Chowla conjecture on the product of polynomials corresponding to places of multiplicative reduction, and the Squarefree conjecture on certain\footnote{To be precise, the polynomials on which we need the Squarefree conjecture are the non-insipid ones, according to the vocabulary introduced in \cite{Desjardins5}.} polynomials associated to places of bad reduction. This proves Zariski density conditionally on these two conjectures and on the Parity conjecture for all non-isotrivial elliptic surfaces. The Chowla and Squarefree conjectures are known to hold for polynomials of small degree, rendering Desjardins's work on finding the root number unconditional for a lot of rational elliptic surfaces.

However, on isotrivial elliptic surfaces it can happen that every fiber has a positive root number, in which case one cannot use this quantity to predict whether the geometric rank is non-zero on infinitely many fibers. When the $j$-invariant of the generic fiber is non-zero and the elliptic surface is rational, the first author proves in \cite[Theorem 1.2]{Desjardins2} $\Q$-unirationality when $j\not=1728$, and Zariski density otherwise, both unconditionally. 

A rational elliptic surface with $j$-invariant zero and only irreducible singular fibers is birationally equivalent to a del Pezzo surface of degree 1, by blowing down the zero section. Combining \cite[Theorem 2.1]{VA11} and \cite[Proposition 2.2]{DN}, we know that the elliptic surfaces with $j=0$ and root number $+1$ on every fiber are of the form $y^2=x^3+c(f(t)^2+3g(t)^2)$, for some $c\in\mathbb{Q}$, and $f,g\in\Q(t)$ non-zero and with no common factors. In particular, all examples in Section \ref{examples} are in this family. Sufficient conditions for such surfaces to have root number $+1$ on all fibers are given in \cite{Desjardins2} in the case $f(t)=At^3$, $g(t)=B$ for some $A,B\in\mathbb{Z}$ non-zero. Finally, in the paper \cite{DN} by Naskr\c ecki and the first author, the generic rank of all surfaces given by an equation $y^2=x^3+at^6+c$ for $a,c\in\mathbb{Z}$ non-zero is computed, proving $\Q$-unirationality for those with non-zero generic rank. However, it is found that most of these surfaces have rank zero.
\end{remark}

\subsection{Set-up and idea of the proof}We set up some terminology that we will use throughout the paper. 
Let $C$ be a smooth, projective, geometrically integral curve over a field $k$. An \textsl{elliptic surface} with base $C$ is a smooth, projective, geometrically integral surface $\mathcal{E}$ endowed with an \textsl{elliptic fibration} $\pi:\mathcal{E}\longrightarrow C$, that is, a surjective morphism such that for almost all points $v\in C$, the fiber $\mathcal{E}_v=\pi^{-1}(v)$ above $v$ is a smooth genus~1 curve, and moreover, the morphism $\pi$ admits a \textsl{section}: a morphism $s\colon C\longrightarrow \mathcal{E}$ such that $\pi\circ s=\mbox{id}_{C}$. The existence of a section implies that all smooth genus 1 fibers of $\pi$ are elliptic curves. If $\mathcal{E}$ is an elliptic surface with base $C$, then its \textsl{generic fiber} is an elliptic curve $E$ over the function field $k(C)$ of $C$. The set of sections of $\pi:\mathcal{E}\longrightarrow C$ form a group which is in natural correspondence with the group of $k(C)$-rational points of $E$ \cite[Proposition 5.4]{MW19}, also called the \textsl{Mordell--Weil} group of $\mathcal{E}$. A multisection of degree $d$ or $d$-section of $\mathcal{E}$ is an irreducible curve $D$ contained in $\mathcal{E}$ such that the projection $\pi_D\colon D\longrightarrow C$ is non-constant and of degree $d$. Note that, when $d=1$, this is simply a section. We often switch between viewing a (multi-)section as a map between schemes and a curve on $\mathcal{E}$. 

\begin{remark}\label{sectionimpliesdensity}
Let $k, S$, and $\mathcal{E}$ be as in Theorem \ref{thm1}. If $\mathcal{E}$ admits a $k$-rational section $s$, then, since the Mordell-Weil group of $\mathcal{E}$ is torsion-free \cite[Theorem 7.4]{MW19}, there are infinitely many distinct multiples of $s$ contained in $\mathcal{E}$, all of which have a dense set of rational points. This implies that $\overline{\mathcal{E}(k)}$ contains an infinite union of distinct one-dimensional irreducible closed subsets, hence $\overline{\mathcal{E}(k)}=\mathcal{E}$, and thus the Zariski density of $\mathcal{E}(k)$ follows, implying the density of $S(k)$ in $S$. Theorem \ref{thm1} is therefore especially interesting in the cases where the Mordell-Weil group of $\mathcal{E}$ has rank zero over~$k$. 
\end{remark}

We prove Theorem \ref{thm1} by constructing, for $k$, $S$, and $\mathcal{E}$ as in the theorem, a family of multisections of $\mathcal{E}$, and using these multisections to show that the set $\mathcal{E}(k)$ is dense in $\mathcal{E}$.

\subsection{Contents of the paper}

The paper is organized as follows. In Section \ref{construction} we construct a family of multisections on $\mathcal{E}$, and show that there is a member of this family that contains a section over $k$, or there is a member of this family that is geometrical integral of genus 0, or there are infinitely many members of this family that are geometrically integral of genus 1. In Section \ref{familyofmultisectionsnontorsion} we show that, in the latter case, this family gives rise to an elliptic fibration over a smooth fiber of $\mathcal{E}$, and that this elliptic fibration has a non-torsion section (see also Figure \ref{fiberfibration}). In Section \ref{proofmainthm} we use all this to prove Theorem~\ref{thm1}. Finally, in Section \ref{examples} we give examples of specific surfaces for which Theorem \ref{thm1} shows that the set of rational points is dense. 

\vspace{11pt}

\textbf{Acknowledgements.} We thank Ronald van Luijk for giving us the idea to look for certain multisections, and for many useful ideas and discussions afterwards. We thank Jean-Louis Colliot-Th\'{e}l\`{e}ne for showing us how to extend the necessary condition over a number field to any field of finite type over $\mathbb{Q}$.
We are grateful to Martin Bright, J\'{a}nos Koll\'{a}r, Bartosz Naskr\c ecki, Marta Pieropan, and Anthony V\'arilly-Alvarado for helpful discussions and remarks. We thank the anonymous referee for useful comments.

\section{Constructing a family of multisections}\label{construction}

Let $k$ be an infinite field with char $k\neq2,3$, take $a,b,c\in k$ with $a,c$ non-zero, let $S$ be the del Pezzo surface of degree 1 given by (\ref{eqdp1thm}) with canonical divisor $K_S$, and assume that $S$ contains a rational point as in Theorem \ref{thm1}. Let $\mathcal{E}$ be the elliptic surface obtained by blowing up the base point of the linear system $|{-K}_S|$. In this section we construct a family of multisections on $\mathcal{E}$; see Proposition~\ref{multisection}. We introduce some notation.

\begin{notation}\label{setup}Let $\pi\colon\mathcal{E}\longrightarrow S$ be the blow-up of $S$ in $\mathcal{O}=(1:1:0:0)$ with exceptional divisor~$\tilde{\mathcal{O}}$. Since~$\pi$ gives an isomorphism between $\mathcal{E}\setminus\tilde{\mathcal{O}}$ and $S\setminus\{\mathcal{O}\}$, we denote a point $R\in\mathcal{E}\setminus\tilde{\mathcal{O}}$ by the coordinates of $\pi(R)$ in $\mathbb{P}_k(2,3,1,1)$. Let $\nu\colon\mathcal{E}\longrightarrow\mathbb{P}^1$ be the elliptic fibration on $\mathcal{E}$, which is given on $S$ by the projection onto $(z:w)$. For $R=(x_R:y_R:z_R:w_R)\in S\setminus\{\mathcal{O}\}$, we denote by $R_{\mathcal{E}}$ the inverse image $\pi^{-1}(R)$ on~$\mathcal{E}$, which is a point on the fiber $\nu^{-1}((z_R:w_R))$.\end{notation}

\begin{definition}\label{defncurve}
For any point $R=(x_R:y_R:z_R:1)$ in $\mathcal{E}$ with $y_R,z_R\neq0$, we define the curve $C_R\subset \mathcal{E}$ as the strict transform of the intersection of $S$ with the surface given by  
\begin{equation}\label{C} 
3x_R^2z_R^2xz - 2y_Rz_R^3y - (x_R^3-2az_R^6 - bz_R^3)z^3 +(2cz_R^3+bz_R^6)w^3=0.
\end{equation}
\end{definition}

\begin{remark}\label{kollar}
The curve $C_R$ in (\ref{C}) was found by finding generators for the subsytem of $|{-3K}_S|$ of curves that contain $R$ as a double point. A general such curve has arithmetic genus 3, and by forcing two extra singularities we hoped to find curves of genus at most 1. When computing the generators in \texttt{magma}, the curve $C_R$ was one of them. Later, the following was pointed out to us by J\'{a}nos Koll\'{a}r. By setting $z=1,\;x'=\tfrac{x}{z^3},\;y'=\frac{y}{z^3},\;w'=\tfrac{w}{z}$, we obtain the affine model of $S$ in $\mathbb{A}^3$ given by $y'^2=x'^3+a+bw'^3+cw'^6$. Setting $\omega=w'^3$, we obtain a map $f$ from this affine model to the cubic surface $T$ given by $y'^2=x'^3+a+b\omega+c\omega^2$ in $\mathbb{A}^3$ with coordinates $x',y',\omega$. For a point $R\in S$ with non-zero $y,z,w$-coordinates, the tangent plane to $T$ at $Q=f(R)$ intersects $T$ in a cubic curve $C_Q$, and the curve $C_R$ as in Definition \ref{defncurve} is the strict transform on $\mathcal{E}$ of the preimage of $C_Q$ under $f$.
\end{remark}

\begin{remark}
For $R=(x_R:y_R:z_R:1)$ in $\mathcal{E}$ with $y_R,z_R\neq0$,  the curve $\pi(C_R)$ does not contain the point $\mathcal{O}$, so we identify the curve $C_R$ with $\pi(C_R)\subset\mathbb{P}(2,3,1,1)$; see Notation \ref{setup}. 
\end{remark} 

The main result in this section is the following. We prove this at the end of the section. 

\begin{proposition}\label{multisection}Let $\mathcal{F}$ be a smooth fiber on $\mathcal{E}$ above a point $(z_0:1)\in\mathbb{P}^1$ with $z_0\in k$ non-zero, such that $\mathcal{F}$ has non-zero rank over $k$. Then there is a point $R\in\mathcal{F}(k)$ such that $C_R$ contains a section defined over $k$, or there is a point $R\in\mathcal{F}(k)$ such that $C_R$ is geometrically integral of genus~0, or there is a non-empty open subset $\mathcal{F}_0$ of $\mathcal{F}$ such that for every point $R\in\mathcal{F}_0(k)$, the curve $C_R$ is geometrically integral of genus 1.
\end{proposition}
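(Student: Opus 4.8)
The plan is to analyze the family $\{C_R\}_{R\in\mathcal F}$ by passing to the cubic surface model of Remark \ref{kollar}, where $C_R$ becomes a cyclic triple cover of a plane cubic branched along an explicit divisor, and then to read off its geometry from a Riemann--Hurwitz count. Since $\mathcal F$ has positive rank over $k$, the set $\mathcal F(k)$ is infinite and Zariski dense in $\mathcal F$; in particular all but finitely many $R\in\mathcal F(k)$ have $y_R\neq0$, while every such $R$ has $z_R=z_0\neq0$, so $C_R$ is defined (Definition \ref{defncurve}) for all but finitely many $R\in\mathcal F(k)$, and it suffices to control $C_R$ for $R$ ranging over a suitable non-empty open subset of $\mathcal F$.

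Writing $Q=f(R)$, the point $Q$ traces out the plane cubic $G:=T\cap\{\omega=z_0^{-3}\}$, which is smooth since $\mathcal F\cong G$ is smooth; and, as $\pi(C_R)$ avoids $\mathcal O$, the curve $C_R$ is the degree-$3$ cyclic cover of $C_Q=T\cap P_Q$ obtained by adjoining $\omega^{1/3}$, where $P_Q$ is the tangent plane to $T$ at $Q$. Since $T$ is an integral cubic surface with finitely many singular points, none lying on $G$, for all but finitely many $Q\in G$ the curve $C_Q$ is a geometrically integral plane cubic with a single node or cusp at $Q$, so that its normalization $\widetilde{C_Q}$ is $\mathbb P^1$; and on $\widetilde{C_Q}$ the function $\omega$ has divisor $Z_1+Z_2+Z_3-3Q_\infty$, where $Q_\infty=C_Q\cap L_\infty$ (with $L_\infty$ the unique line of $T$ along which the plane at infinity meets $T$, with multiplicity $3$) and $Z_1,Z_2,Z_3$ are the three points of $P_Q\cap E_0$, with $E_0:=T\cap\{\omega=0\}$ the plane cubic $y'^2=x'^3+a$ --- smooth precisely because $a\neq0$.

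The trichotomy now comes from the branch behaviour of $C_R\to\widetilde{C_Q}$: this cover is totally ramified over each $Z_i$ of multiplicity prime to $3$ and, since $\operatorname{ord}_{Q_\infty}\omega=-3$, is unramified over $Q_\infty$. If the $Z_i$ are pairwise distinct for generic $R$, then $\omega|_{\widetilde{C_Q}}$ is not a cube, so $C_R$ is geometrically integral, and $2g-2=3(2\cdot0-2)+3(3-1)=0$ gives geometric genus $1$; as the locus where $C_R$ is geometrically integral is open, non-empty, and carries the constant $\delta$-invariant $3$ (one ordinary double point over each of the three preimages of the node $Q$), we are in the third case with $\mathcal F_0$ that locus intersected with $\{y_R\neq0\}$. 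If instead exactly two of the $Z_i$ coincide for generic $R$ --- equivalently, the moving line $P_Q\cap\{\omega=0\}$ is generically tangent to $E_0$ --- then $C_R$ is still geometrically integral but Riemann--Hurwitz gives geometric genus $0$; this holds on a dense open subset of $\mathcal F$, which has a $k$-point since $\mathcal F(k)$ is infinite, putting us in the second case. If all three $Z_i$ coincide, $\omega|_{\widetilde{C_Q}}$ is a cube, the cover splits, $C_R$ contains a section, and a short descent exhibits one defined over $k$, giving the first case; and one checks these three alternatives are exhaustive by following the incidence of the line $P_Q\cap\{\omega=0\}$ with the fixed cubic $E_0$ as $R$ varies over $\mathcal F$.

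The step I expect to be the main obstacle is the uniform determination of the divisor of $\omega$ on $\widetilde{C_Q}$ --- in particular verifying that $\omega$ has no zeros or poles beyond the $Z_i$ and $Q_\infty$ and that $Q_\infty\notin\{Q,Z_1,Z_2,Z_3\}$ for all relevant $Q$ --- together with deciding, in terms of $a,b,c$ and $z_0$, which of the three cases actually occurs. The genus count itself is then immediate, and the passage from "generic $R$" to the stated open subset $\mathcal F_0$ (or to the existence of a suitable $R\in\mathcal F(k)$) is routine given that $\mathcal F(k)$ is infinite and Zariski dense.
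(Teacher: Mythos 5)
Your route is genuinely different from the paper's. The paper derives Proposition \ref{multisection} in a few lines from Proposition \ref{singularpointsC}: after removing from $\mathcal F$ the finitely many $R$ with $y_R=0$ or with $\pi(R)$ on an exceptional curve of $\overline S$, every $R\in\mathcal F_0(k)$ gives a $C_R$ that contains a $k$-section or is geometrically integral of geometric genus at most $1$, and the stated trichotomy follows at once; the work is hidden in the earlier identification $C_R\in|{-3K}_{\mathcal E}+3\tilde{\mathcal O}|$, the adjunction computation $p_a(C_R)=4$, and the three $\sigma$-conjugate double points. You instead rebuild the genus bound from scratch through the Koll\'ar model of Remark \ref{kollar}, viewing $\widetilde{C_R}\to\widetilde{C_Q}\cong\mathbb P^1$ as the cyclic cubic cover associated to $\omega^{1/3}$ and running Riemann--Hurwitz off the coincidence pattern of $Z_1,Z_2,Z_3 = (P_Q\cap\{\omega=0\})\cap E_0$. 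Where this works, it is sharper (genus exactly $1$ rather than at most $1$) and more vivid; it is not, however, lighter, since you are effectively re-proving the content of Proposition \ref{singularpointsC} in a different model.

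There is, though, a concrete gap in the branch analysis you flag but do not resolve. Your Riemann--Hurwitz count places all ramification over $\omega=0$ and declares the cover unramified over $Q_\infty$ because ``$\operatorname{ord}_{Q_\infty}\omega=-3$.'' But the projective closure of $T$ is singular along the line at infinity: the two points $(0:\pm\sqrt c:1:0)$ are singular precisely because $c\neq0$, and $(0:\sqrt c:1:0)$ is exactly the image of $(0:\sqrt c:0:1)\in S$, which Remark \ref{genus1} records as one of the two loci where $C_R$ can acquire an extra double point and drop to genus $0$. When $P_Q$ passes through such a singular point at infinity, $Q_\infty$ is not a smooth point of $C_Q$ with pole order $3$, and your computation of the ramification over $\infty$ fails. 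As written, your sketch only sees the genus-$0$ mechanism supported on $\{\omega=0\}$ and is silent about the one supported on $\{\omega=\infty\}$. One can repair this (an integral plane cubic has only one singular point, so a second singularity at infinity forces $C_Q$ reducible, which you already exclude; and one must then check that this excludes only finitely many $Q\in G$), but that argument is precisely what needs to be supplied. Similarly, ``a short descent exhibits one defined over $k$'' in the splitting case is only asserted; the paper's proof does this by showing the exceptional component $H_1$ of $\overline C_R$ is unique, hence Galois-stable. Both approaches lead to the same trichotomy, but the paper's intrinsic adjunction argument completely avoids the delicate behaviour of $T$ at infinity that your covering-space picture still owes.
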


\begin{remark}\label{affinespace}
Let $R=(x_R:y_R:z_R:1)$ be a point in $\mathcal{E}$, with $y_R,z_R\neq0$, and let $C_R$ be the corresponding curve as in Definition \ref{defncurve}. Let $\mathbb{A}^3$ be the affine open subset of $\mathbb{P}(2,3,1,1)$ given by $w\neq0$, with coordinates $X=\frac{x}{w^2}$, $Y=\frac{y}{w^3}$, and $T=\frac{z}{w}$. We describe the intersection $C_R\cap\mathbb{A}^3$. Write 
\begin{align}\label{Caffien}
F&=Y^2-X^3-aT^6-bT^3-c,\\
G&=3x_R^2z_R^2XT- 2y_Rz_R^3Y - (x_R^3-2az_R^6 - bz_R^3)T^3 +2cz_R^3+bz_R^6.\nonumber
\end{align}
We have $C_R\cap\mathbb{A}^3=Z(F)\cap Z(G)$, where $Z(F)$ and $Z(G)$ are the zero loci of $F$ and $G$, respectively. Since $y_R,z_R\neq0$, the projection $p\colon \mathbb{A}^3\longrightarrow\mathbb{A}^2$ to the $X,T$-coordinates has a section given by $$r\colon(X,T)\longmapsto\left(X,\frac{3x_R^2z_R^2XT- (x_R^3-2az_R^6 - bz_R^3)T^3 +2cz_R^3+bz_R^6}{2y_Rz_R^3},T\right).$$ Note that $p$ induces an isomorphism $Z(G)\longrightarrow \mathbb{A}^2$ with inverse $r$. It follows that $C_R\cap\mathbb{A}^3$ is isomorphic to $p(Z(F)\cap Z(G))$, and the latter is defined by $H_R=0$, where
\begin{align}\label{H}
H_R&=4y_R^2z_R^6X^3  -9x_R^4z_R^4X^2T^2 + ( 6x_R^5z_R^2-12ax_R^2z_R^8 - 6bx_R^2z_R^5)XT^4 \nonumber \\
&- (12cx_R^2z_R^5 +6bx_R^2z_R^8)XT + (4acz_R^6+8ax_R^3z_R^6- b^2z_R^6 + 2bx_R^3z_R^3 -x_R^6)T^6 \nonumber\\
&- 2(4acz_R^9 -2cx_R^3z_R^3-3bx_R^3z_R^6 - b^2z_R^9)T^3 + 4acz_R^{12}+4cx_R^3z_R^6- b^2z_R^{12}
.
\end{align}
\end{remark}

We denote by $K_{\mathcal{E}}$ the canonical divisor of $\mathcal{E}$. Let $\overline{k}$ be an algebraic closure of $k$, and write $\overline{C}_R$ for the base change $C_R\times_k\overline{k}$. Recall that $\nu\colon\mathcal{E}\longrightarrow \mathbb{P}^1$ is the elliptic fibration on $\mathcal{E}$ (Notation~\ref{setup}).

\begin{lemma}\label{basicfactscr}
Let $R=(x_R:y_R:z_R:1)$ be a point in $\mathcal{E}$ with $y_R,z_R\neq0$, and let $C_R$ be the curve in Definition \ref{defncurve}. The following hold. 
\begin{itemize}
\item[](i) The curve $C_R$ does not contain a fiber of $\mathcal{E}$. 
\item[](ii) The curve $C_R$ is contained in the linear system $|{-3K}_{\mathcal{E}}+3\tilde{\mathcal{O}}|$, and intersects every fiber of $\nu$ in three points counted with multiplicity.
\end{itemize}
\end{lemma}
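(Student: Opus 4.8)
The plan is to prove both statements by working with the explicit defining equations. For part (i), suppose for contradiction that $C_R$ contains a fiber $\nu^{-1}((z_1:w_1))$ of $\mathcal{E}$. Since $C_R$ is by definition irreducible (it is the strict transform of a curve that we claim is irreducible, or at any rate we treat $C_R$ as the relevant irreducible component), containing an entire fiber — which is a genus~$1$ curve, hence of dimension~$1$ — would force $C_R$ to equal that fiber. But the surface in (\ref{C}) cutting out $C_R$ has a term $3x_R^2 z_R^2 xz$ with $x_R, z_R \neq 0$, so its intersection with $S$ genuinely involves the $x$-coordinate and cannot be a union of fibers (which are cut out by equations in $z,w$ alone). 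More carefully: a fiber above $(z_1:w_1)$ with $w_1 \neq 0$ is, after dehomogenizing as in Remark \ref{affinespace}, the curve $T = z_1/w_1$ in $\mathbb{A}^3$, on which $F$ from (\ref{Caffien}) restricts to $Y^2 - X^3 - (\text{const})$. The restriction of $G$ to this plane is $3x_R^2 z_R^2 X (z_1/w_1) - 2 y_R z_R^3 Y + (\text{const})$, which is a non-constant linear polynomial in $X, Y$ (the coefficient of $Y$ is $-2y_R z_R^3 \neq 0$), hence $Z(G)$ meets the fiber in a proper subvariety; so the fiber is not contained in $C_R = Z(F) \cap Z(G)$. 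The fibers above $(1:0)$ and $(0:1)$ require a separate quick check using the explicit Weierstrass model, but since (\ref{C}) again involves $x$ nontrivially the same reasoning applies.

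For part (ii), the plan is a divisor-class computation on $\mathcal{E}$. On $\mathbb{P}(2,3,1,1)$ the divisor of a homogeneous form of weighted degree $6$ (such as the left-hand side of (\ref{C}), which has degree $2+1\cdot1 = 3$ in $x$-weight plus\dots — one checks $xz$ has weighted degree $3$, $y$ has degree $3$, $z^3$ and $w^3$ have degree $3$, so (\ref{C}) is homogeneous of weighted degree~$3$) cuts out on $S$ a curve linearly equivalent to the appropriate multiple of $-K_S$; here weighted degree $3$ corresponds to $|{-3K}_S|$. Thus the intersection $S \cap Z(\text{(\ref{C})})$ lies in $|{-3K}_S|$. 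Taking strict transforms under $\pi\colon \mathcal{E} \to S$: if the total transform is $\pi^*(-3K_S) = -3K_{\mathcal{E}} + 3\tilde{\mathcal{O}}$ (using $K_{\mathcal{E}} = \pi^*K_S + \tilde{\mathcal{O}}$) and the curve passes through $\mathcal{O}$ with the right multiplicity, one reads off the class of $C_R$. One must check the multiplicity of $S \cap Z(\text{(\ref{C})})$ at $\mathcal{O} = (1:1:0:0)$: near $\mathcal{O}$ we set $x = 1$, and (\ref{C}) becomes $3x_R^2 z_R^2 z - 2 y_R z_R^3 y - (\ldots) z^3 + (\ldots) w^3 = 0$, which is smooth at $z=w=0$ with tangent $z = 0$ (as $x_R, z_R \neq 0$), so it passes through $\mathcal{O}$ with multiplicity exactly~$1$; meanwhile $\tilde{\mathcal{O}}$ appears with multiplicity $3$ in $\pi^*(-3K_S)$ since $-K_S$ has a base point of multiplicity~$1$ at $\mathcal{O}$ but $|{-3K}_S|$ members pass through $\mathcal{O}$ with multiplicity up to~$3$ — here our specific member has multiplicity~$1$, so the strict transform $C_R$ satisfies $C_R = \pi^*(-3K_S) - \tilde{\mathcal{O}} = -3K_{\mathcal{E}} + 3\tilde{\mathcal{O}} - \tilde{\mathcal{O}}$; one must reconcile this with the stated $|{-3K}_{\mathcal{E}} + 3\tilde{\mathcal{O}}|$, which suggests instead that the relevant member passes through $\mathcal{O}$ with multiplicity~$0$, i.e.\ does not pass through $\mathcal{O}$ at all (consistent with the Remark preceding the lemma) — so in fact $C_R = \pi^*(-3K_S) = -3K_{\mathcal{E}} + 3\tilde{\mathcal{O}}$ directly. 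Finally, the intersection number with a fiber: $(-3K_{\mathcal{E}} + 3\tilde{\mathcal{O}})\cdot \mathcal{F} = -3 K_{\mathcal{E}}\cdot\mathcal{F} + 3\tilde{\mathcal{O}}\cdot\mathcal{F}$; since $\tilde{\mathcal{O}}$ is a section, $\tilde{\mathcal{O}}\cdot\mathcal{F} = 1$, and by adjunction $K_{\mathcal{E}}\cdot\mathcal{F} = 0$ (fibers are genus~$1$), giving $C_R \cdot \mathcal{F} = 3$. Combined with part (i), which guarantees this intersection is $0$-dimensional, $C_R$ meets every fiber in exactly three points counted with multiplicity.

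The main obstacle I anticipate is pinning down the intersection multiplicity of $S \cap Z(\text{(\ref{C})})$ at the base point $\mathcal{O}$ correctly, and hence the precise coefficient of $\tilde{\mathcal{O}}$ in the class of $C_R$ — one has to be careful about the difference between the total transform, the strict transform, and the behavior of the weighted-projective equation at the singular-looking point $(1:1:0:0)$. The cleanest route is probably to use the alternative description in Remark \ref{kollar}: the curve $C_Q$ is a hyperplane (tangent-plane) section of the cubic surface $T$, hence an anticanonical curve on $T$ (or rather in $|{-K}_T| = |\mathcal{O}_T(1)|$), and pulling back along the degree-$3$ map $f$ and taking strict transforms converts this into the stated linear system on $\mathcal{E}$; the fiber intersection number $3$ then also has a transparent interpretation as $\deg f \cdot (\text{something})$ versus the three points of $C_R$ over a fixed $T$-value, which matches the cubic $H_R$ in (\ref{H}) having degree~$3$ in $X$. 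I would present whichever of the two computations (direct on $\mathbb{P}(2,3,1,1)$, or via $f$ and $T$) turns out to require less bookkeeping.
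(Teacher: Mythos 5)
Your proposal is correct and takes essentially the same route as the paper: for (i) the paper observes that the $X^3$-coefficient of $H_R$ in (\ref{H}) is a non-zero constant (logically equivalent to your observation that the $Y$-coefficient of $G$ is non-zero), and for (ii) it identifies $\pi(C_R)$ with a hyperplane section under the 3-uple embedding of $S$ into $\mathbb{P}^6$ — which is exactly your weighted-degree-3 bookkeeping — notes $\mathcal{O}\notin\pi(C_R)$, and computes $F\cdot C_R = 3$ via $F\sim -K_{\mathcal{E}}$ and $(-K_{\mathcal{E}})^2 = 0$, giving the same answer as your adjunction computation. One small correction: $C_R$ is \emph{not} irreducible by definition (Proposition~\ref{singularpointsC}(ii) shows it may split into a section plus a $2$-section), so discard the opening of your part (i) and keep only the ``more carefully'' argument, which rightly does not rely on irreducibility.
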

\begin{proof}
(i). From (\ref{C}) it is clear that $C_R$ does not contain the fiber $w=0$. Moreover, since the coefficient of $X^3$ of $H_R$ in (\ref{H}) as a polynomial in $k[T]$ is constant and non-zero, $C_R$ does not contain any fiber with $w\neq0$, either. \\
(ii). The linear system $|{-3K}_S|$ induces the 3-uple embedding of $S$ into $\mathbb{P}^6$ \cite[page 1200]{CO99}. Under this embedding, the curve $\pi(C_R)$ is given by the intersection of $S$ with a hyperplane, hence we have $\pi(C_R)\sim {-3K}_S$. 
Since $y_R,z_R\neq0$, the image $\pi(C_R)$ does not contain the point $\mathcal{O}$, so this implies 
$$C_R=\pi^*(\pi(C_R))\in|\pi^*({-3K}_S)|=|{-3K}_{\mathcal{E}}+3\tilde{\mathcal{O}}|.$$ Since a fiber $F$ of $\nu$ is linearly equivalent to $-K_{\mathcal{E}}$, which has self-intersection $(\pi^*({K}_S)-\tilde{\mathcal{O}})^2=0$, and $\tilde{\mathcal{O}}$ is a section of $\nu$, we have $F\cdot C_R=F\cdot({-3K}_{\mathcal{E}}+3\tilde{\mathcal{O}})=0+3=3.$ Since $F$ is irreducible, it follows that, since $F$ is not contained in $C_R$, the number of intersection points of $F$ and $C_R$ is 3, counted with multiplicity.
\end{proof}

In Proposition \ref{singularpointsC} we describe for which choices of $R\in\mathcal{E}$ the curve $C_R$ has genus at most~1. In the proof we use several known results on the exceptional curves on $S$, which we state in the following remark.

\begin{remark}\label{exccurvessections}The surface $S$ contains 240 exceptional curves, which are defined over the separable closure $k^{\mbox{\tiny{sep}}}$ of $k$ in $\overline{k}$; this follows from \cite[Propositions 5 and 7]{Co88}, see for example \cite[Theorem 2.1.1]{VA09}. Therefore, from \cite[Theorem 1.2]{VA08} it follows that the exceptional curves on $S$ are exactly the curves given by $$x=p(z,w),\;\;\;y=q(z,w),$$ where $p,q\in k[z,w]$ are homogeneous of degrees 2 and 3. Note that this implies that an exceptional curve never contains $\mathcal{O}=(1:1:0:0)$. Therefore, for an exceptional curve $C$ on $S$, its strict transform $\pi^*(C)$ on $\mathcal{E}$ satisfies $$\pi^*(C)^2=-1,\;\;\;\;\;\pi^*(C)\cdot-K_{\mathcal{E}}=\pi^*(C)\cdot(\pi^*({K}_S)+\tilde{\mathcal{O}})=1+0=1,$$ so $\pi^*(C)$ is an exceptional curve on $\mathcal{E}$ as well. Moreover, since a fiber of $\nu$ is linearly equivalent to $-K_{\mathcal{E}}$, the curve $\pi^*(C)$ intersects every fiber once. This gives a section of~$\nu$. From \cite[Lemma~7.11]{MW19} it follows that the sections on $\mathcal{E}$ that come from exceptional curves on $S$ are exactly those that are disjoint from $\tilde{\mathcal{O}}$. 
\end{remark}

Let $\zeta_3\in \overline{k}$ be a primitive third root of unity. Note that, for a curve $C_R$ as in Definition~\ref{defncurve}, the morphism of $\mathbb{P}(2,3,1,1)$ given by multiplying the $w$-coordinate with $\zeta_3^2$ restricts to an automorphism of $\overline{C}_R$. 

\begin{definition}\label{defautomorphism}
Let $R=(x_R:y_R:z_R:1)$ be a point in $\mathcal{E}$, with $y_R,z_R\neq0$, and let $C_R$ be the corresponding curve as in Definition \ref{defncurve}. 
By $\sigma$ we denote the automorphism of $\overline{C}_R$ given by
\begin{equation}\label{definitionsigma}\sigma\colon(x:y:z:w)\longmapsto(x:y:z:\zeta_3^2w)=(\zeta_3^2x:y:\zeta_3 z:w)
\end{equation}
\end{definition}

Recall that $\pi\colon\mathcal{E}\longrightarrow S$ is the blow-up of $S$ in $\mathcal{O}$, and $\nu\colon\mathcal{E}\longrightarrow\mathbb{P}^1$ is the elliptic fibration on $\mathcal{E}$.

\begin{proposition}\label{singularpointsC}Let $R=(x_R:y_R:z_R:1)$ be a point in $\mathcal{E}$, with $x_R\in k$, $y_R,z_R\in k$ non-zero, and let $C_R$ be the curve in Definition \ref{defncurve}. The following hold.
\begin{itemize}
\item[](i) The curve $C_R$ is singular in $R$, $\sigma(R)$, and $\sigma^2(R)$. 
\item[](ii) If $\pi(R)$ is not contained in an exceptional curve on $\overline{S}=S\times_k\overline{k}$, then $C_R$ either contains a section that is defined over $k$, or it is geometrically integral and has geometric genus at most~1, in which case $R$, $\sigma(R)$, $\sigma^2(R)$ are all double points.
\end{itemize}
\end{proposition}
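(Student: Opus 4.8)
The plan is to prove (i) by a local computation at $R$ and (ii) by enumerating the ways $\overline{C}_R$ can split into irreducible components, using the numerical class $\overline{C}_R\sim\pi^*(-3K_S)$ and the hypothesis on $\pi(R)$. For \textbf{part (i)}: since $R$ has nonzero $z$- and $w$-coordinates it lies off $\tilde{\mathcal{O}}$ and off the fibers $z=0$, $w=0$, so $\pi$ is a local isomorphism near $R$ and it suffices to show $\pi(C_R)$ is singular at $\pi(R)$. I would use Remark \ref{kollar}: away from the fibers over $(1:0)$ and $(0:1)$, $\pi(C_R)$ is the $f$-preimage of the curve $C_Q$ cut out on the cubic surface $T$ by its tangent plane at $Q=f(R)$. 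The intersection of a cubic surface with its tangent plane at a smooth point is a plane cubic singular at that point, so it suffices that $T$ be smooth at $Q$; the Jacobian criterion shows the only point where $T$ can fail to be smooth has vanishing $y'$-coordinate, whereas the $y'$-coordinate of $Q$ is a nonzero scalar multiple of $y_R\neq0$. Hence $C_Q$ is singular at $Q$, and since $f$ is étale over $Q$ near each of its three preimages $R,\sigma(R),\sigma^2(R)$ (their $w$-coordinates are nonzero), $C_R$ is singular at all three. (Alternatively one checks directly that $H_R$ from Remark \ref{affinespace} and both its partial derivatives vanish at $(X,T)=(x_R,z_R)$ using $y_R^2=x_R^3+az_R^6+bz_R^3+c$; this is routine but lengthy, and singularity at $\sigma(R),\sigma^2(R)$ then follows since $\sigma$ is an automorphism of $\overline{C}_R$.)

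\textbf{Part (ii).} First, $R,\sigma(R),\sigma^2(R)$ are three distinct points, since $z_R\neq0$ forces any weighted-homogeneous rescaling identifying two of them to be trivial; and $C_R$ is defined over $k$ because $x_R,y_R,z_R\in k$. By Lemma \ref{basicfactscr}(ii), $\overline{C}_R\sim\pi^*(-3K_S)$, so $\overline{C}_R^2=9$, $p_a(\overline{C}_R)=4$ (from $2p_a-2=\overline{C}_R\cdot(\overline{C}_R+K_{\mathcal{E}})=\pi^*(-3K_S)\cdot(\pi^*(-2K_S)+\tilde{\mathcal{O}})=6$), and $\overline{C}_R\cdot\tilde{\mathcal{O}}=0$. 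Since $\overline{C}_R$ is a pullback from $S$, the curve $\tilde{\mathcal{O}}$ is not a component of it, and as the finitely many components meet $\tilde{\mathcal{O}}$ nonnegatively with total intersection $0$, every component of $\overline{C}_R$ is disjoint from $\tilde{\mathcal{O}}$. By Lemma \ref{basicfactscr}(i) no component is a fiber, so the components are multisections of $\nu$ whose fiber-degrees, weighted by multiplicity, sum to $3$. Hence the decomposition of $\overline{C}_R$ is exactly one of: (a) a single integral curve of fiber-degree $3$; (b) an irreducible bisection plus a section; (c) a section with multiplicity $3$; (d) a section with multiplicity $2$ plus a different section; (e) three distinct sections. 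A section is a smooth rational curve meeting each fiber once, hence a $(-1)$-curve by adjunction. In (c) and (d) this forces $\overline{C}_R^2=-9$ and $\overline{C}_R^2=4(D_1\cdot D_2)-5$ respectively, neither of which can equal $9$, so (c) and (d) do not occur. In (e), each component is a section disjoint from $\tilde{\mathcal{O}}$, hence maps under $\pi$ to an exceptional curve of $\overline{S}$ by Remark \ref{exccurvessections}; since $R$ lies on one of these components, $\pi(R)$ would lie on an exceptional curve, contradicting the hypothesis. In (b), the section is the unique component of its fiber-degree, hence Galois-stable (the Galois action fixes the class of a fiber), hence defined over $k$, so $C_R$ contains a section defined over $k$. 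The only remaining case is (a).

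In case \textbf{(a)}, $\overline{C}_R$ is geometrically integral with $p_a(\overline{C}_R)=4$, and by (i) it is singular at the three distinct points $R,\sigma(R),\sigma^2(R)$. Since the $\delta$-invariant of a point of multiplicity $m$ is at least $\binom{m}{2}$ and the geometric genus $p_a-\sum_P\delta_P$ of an integral curve is nonnegative, none of the three points can have multiplicity $\geq3$: together with the contributions $\delta\geq1$ from the other two, this would force negative geometric genus. So $R,\sigma(R),\sigma^2(R)$ are double points, the sum of their $\delta$-invariants is at least $3$, and $p_g(\overline{C}_R)\leq p_a(\overline{C}_R)-3=1$. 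This proves (ii). The main obstacle is bookkeeping rather than a single deep idea: in (i) one must either rely carefully on the cubic-surface model of $C_R$ (and pin down where $T$ is singular) or grind through the explicit partial-derivative computation; in (ii) one must verify that the list of component types is exhaustive, that the numerically excluded types really cannot occur, and that the hypothesis on $\pi(R)$ is exactly what removes the three-sections configuration.
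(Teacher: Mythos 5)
Your proof is correct, and the overall strategy matches the paper's: a local argument for (i), and a case analysis on the component structure of $\overline{C}_R$ combined with the adjunction bound $p_a(\overline{C}_R)=4$ for (ii). There are, however, two genuine (if modest) departures worth noting. For (i) you argue via the cubic-surface model of Remark \ref{kollar} (tangent-plane section singular at the point of tangency, smoothness of the cubic at $f(R)$ from $y_R\neq 0$, \'etaleness of $f$ away from $w'=0$); the paper instead does the computation directly in the local ring at $R$ by rewriting $F$ and $G$ in terms of $X-x_R$, $Y-y_R$, $T-z_R$, and only mentions the Koll\'ar-style argument in a remark afterwards. Both are fine. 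For (ii) you rule out the configurations $3D$ and $2D_1+D_2$ purely by the computation $\overline{C}_R^2=9$, using $D^2=-1$ for sections, whereas the paper discards those configurations (along with $D_1+D_2+D_3$) by invoking the hypothesis that $\pi(R)$ avoids exceptional curves: since $R$ must lie on, and in fact be singular on, the non-$H_1$ part, $\pi(R)$ would land on an exceptional curve. Your version isolates precisely where the hypothesis is indispensable, namely only to exclude the three-distinct-sections configuration, which cannot be ruled out numerically (three pairwise intersection numbers summing to $6$ is consistent). One small point: in case (b) you should, as the paper does, note that the exceptional curves of $\overline{S}$ live over $k^{\rm sep}$ so that Galois stability of the unique degree-one component indeed gives a section over $k$; you implicitly use this when saying the Galois action permutes the components.
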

\begin{proof}
(i). It is an easy check that $R$ is contained in $C_R$. Let $m_R$ be the maximal ideal in the local ring of $R$ on $\mathcal{E}$. The point $R$ lies in $\mathbb{A}^3\subset \mathbb{P}(2,3,1,1)$ defined by $w\neq0$ as in Remark \ref{affinespace}. The ideal $m_R$ is generated by $X-x_R$, $Y-y_R$, and $T-z_R$. Let $F,G$ be as in (\ref{Caffien}).
We have $\mathcal{E}\cap\mathbb{A}^3=Z(F)$, and using the identity $c=y_R^2-x_R^3-az_R^6-bz_R^3$, we can write $F$ as \begin{multline*}
F = 2y_R(Y-y_R)- 3x_R^2(X-x_R)- (6az_R^5+3bz_R^2)(T-z_R)\\
+(Y-y_R)^2-(X-x_R)^3 - 3x_R(X-x_R)^2 - a(T-z_R)^6 - 6az_R(T-z_R)^5 \\
- 15az_R^2(T-z_R)^4 - (20az_R^3+b)(T-z_R)^3 - (15az_R^4+3bz_R)(T-z_R)^2.
\end{multline*}Set $$\alpha=2y_R(Y-y_R)- 3x_R^2(X-x_R)- (6az_R^5+3bz_R^2)(T-z_R),$$then it follows that $\alpha $ is contained in $m_R^2$, and the tangent line to $\mathcal{E}$ at $R$ is given by $\alpha=0$.

Similarly, we can rewrite $G$ as  
$$
G=-z_R^3\alpha
+3x_R^2z_R^2(X-x_R)(T-z_R) - (x_R^3-2az_R^6-bz_R^3)(T-z_R)^3 - (3x_R^3z_R-6az_R^7-3bz_R^4)(T-z_R)^2,
$$
so we conclude that $G$ is contained in $m_R^2$, hence $C_R$ is singular in $R$. Since $\sigma$ is an automorphism of $C_R$, this implies that $C_R$ is singular in $\sigma(R)$ and $\sigma^2(R)$, as well. \\
(ii). Assume that $\pi(R)$ is not contained in an exceptional curve on $\overline{S}$. We distinguish two cases. First assume that $\overline{C}_R$ is not irreducible or not reduced. Since $\overline{C}_R$ does not contain a fiber and intersects every fiber with multiplicity 3 (Lemma \ref{basicfactscr}), this implies that there is a curve that intersects every fiber with multiplicity one (hence is a section), say $H_1$, such that $\overline{C}_R$ either contains $H_1$ as irreducible component, or $\overline{C}_R$ is a multiple of $H_1$. Since $\overline{C}_R$ is disjoint from the zero section, it follows that $\pi(H_1)$ is an exceptional curve on $\overline{S}$ (Remark \ref{exccurvessections}). Therefore, by our assumption, $R$ is not contained in $H_1$, so $\overline{C}_R$ is not a multiple of $H_1$, and $H_1$ is an irreducible component of $\overline{C}_R$. Let $H_2$ be the other (not necessarily irreducible or reduced) component of $\overline{C}_R$, which contains $R$. If $H_2$ were not irreducible or not reduced, it would either be a double section or two sections intersecting in $R$. In both cases, $\pi(R)$ lies on an exceptional curve, contradicting our first assumption. We conclude that $H_2$ is irreducible and reduced. Since $C_R$ is defined over $k$, it is fixed by the action of the absolute Galois group of $k$ on Pic $S$. The exceptional curves of $\overline{S}$ are all defined over the separable closure $k^{\mbox{\tiny{sep}}}$ of $k$ by Remark \ref{exccurvessections}, so the Galois group $\mathrm{Gal}(k^{\mbox{\tiny{sep}}}/k)$ acts on them. Since $\overline{C}_R$ contains only one exceptional curve of $\overline{S}$, which is $H_1$, it follows that this component is invariant under the Galois action, hence it is defined over $k$. This finishes the first case. Now assume that $\overline{C}_R$ is irreducible and reduced. Since $C_R$ is contained in the linear system $|{-3K}_{\mathcal{E}}+3\tilde{\mathcal{O}}|$ by Lemma~\ref{basicfactscr}, from the adjunction formula it follows that its arithmetic genus is $\frac12\cdot(9-3)+1=4.$ Since the three distinct points $R$, $\sigma(R)$, $\sigma^2(R)$ are all singular on $C_R$ with the same multiplicity, we conclude that they all have multiplicity~2, and the geometric genus of $C_R$ is at most 1. 
\end{proof}

\begin{remark}
Proposition \ref{singularpointsC} (i) also follows from the description of $C_R$ in Remark \ref{kollar}. The curve $C_Q$ defined there is clearly singular in $Q$, as it is contained in the tangent plane to $T$ at $Q$. The inverse image of $C_Q$ under $f$ contains the 3 singular points $(x',y',w')$ for which $w'^3=w_R$, which are exactly the points $R$, $\sigma(R)$, and $\sigma^2(R)$. 
\end{remark}

\begin{remark}\label{genus1}In the last proof, we concluded that in the case where $C_R$ is geometrically integral, the geometric genus of $C_R$ is at most 1. If it were 0, then $C_R$ would contain exactly one more singular point besides $R$, $\sigma(R)$, $\sigma^2(R)$, say $Q$. Then $\sigma(Q)$ and $\sigma^2(Q)$ would be singular points of $C_R$ as well, so $Q$ would be a fixed point of~$\sigma$. 
Note that the points on the intersection of $C_R$ with the fiber above $(1:0)$ are fixed points of~$\sigma$. Assume that $\sigma$ has a fixed point $Q=(x_Q:y_Q:z_Q:1)$ in $C_R\setminus(C_R\cap\mathcal{E}_{(1:0)})$. From (\ref{definitionsigma}) it follows that there is a $\lambda\in k$ such that $\lambda^3y_Q=y_Q,$ $\lambda^2x_Q=x_Q,$ $\lambda z_Q=z_Q,$ and $\lambda\zeta_3^2=1.$ The last equation implies $\lambda=\zeta_3^{3n-2}$ for some $n>0$, and it follows that $x_Q=z_Q=0$. From the fact that $Q$ lies in $\mathcal{E}$ is follows that we have $
y_Q^2=c.$ We conclude that if $C_R$ is geometrically integral, then it has genus 0 if and only if it has a singular point which is a fixed point of $\sigma$ and which either lies on the fiber~$(1:0)$, or is the point $(0:\sqrt{c}:0:1)$. In our experiments with different surfaces and different points we have not found an example where this happens. \end{remark}

\begin{prooffamilymultisections}
Let $\mathcal{F}$ be as in the proposition. There are finitely many points on $\mathcal{F}$ with $y$-coordinate 0, and from Remark \ref{exccurvessections} it follows that there are at most 240 points $R$ on $\mathcal{F}$ with $\pi(R)$ contained in an exceptional curve on $\overline{S}$; let $V_1$ be the set of points in $\mathcal{F}$ for which either of these two conditions holds. Since $\mathcal{F}$ has positive rank over $k$ by assumption, the set of $k$-rational points on $\mathcal{F}_0=\mathcal{F}\setminus V_1$ is infinite. For each point $R\in\mathcal{F}_0(k)$ we can define the curve $C_R$ as in Definition \ref{defncurve}, and $C_R$ contains either a section defined over $k$, or it is geometrically integral and has geometric genus at most 1 by Proposition \ref{singularpointsC}. Therefore, 
if there is no $R\in\mathcal{F}(k)$ such that $C_R$ either contains a section over $k$ or is geometrically integral of genus 0, then for all $R\in\mathcal{F}_0(k)$, the curve $C_R$ is geometrically integral of genus 1.  \qed
\end{prooffamilymultisections}

\section{An elliptic fibration with non-torsion section}\label{familyofmultisectionsnontorsion}
Let $\mathcal{F}$ be a smooth fiber on $\mathcal{E}$ above a point $(z_0:1)\in\mathbb{P}^1$ with $z_0\neq0$, such that $\mathcal{F}$ has positive rank over $k$. Assume that there is an open subset of $\mathcal{F}$, which we denote by $\mathcal{F}_0$, such that for every point $R\in\mathcal{F}_0$, the curve $C_R$ is well-defined and geometrically integral of genus 1. In this section we show that this implies that there is an elliptic fibration over $\mathcal{F}$ that admits a non-torsion section. 

\begin{remark}\label{3section} Let $R$ be a point in $\mathcal{F}_0$. Since $C_R$ intersects every fiber of $\nu$ in three points counted with multiplicity (Lemma \ref{basicfactscr}), it is a 3-section. Moreover, since $R$ is a double point on $C_R$ by Proposition \ref{singularpointsC}, there is a unique third point of intersection of $C_R$ with $\mathcal{F}$, say $Q$ (see Figure  \ref{fig:multisection}). Hence $E_R=(\tilde{C}_R,Q)$ is an elliptic curve, where $\tilde{C}_R$ is the normalization of $C_R$. The curve $E_R$ contains a rational point, which we denote by $D_R$, which is the sum of the points corresponding to $\sigma(Q)$ and $\sigma^2(Q)$ on $C_R$. 
\end{remark}

\begin{figure}[!h]
\begin{center}
\includegraphics[width=8cm]{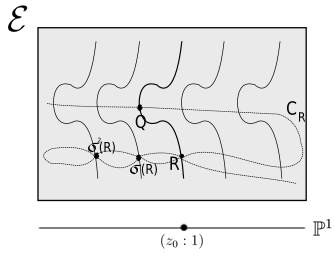}
  \caption{\label{fig:multisection}The multisection $C_R$ on $\mathcal{E}$, when $C_R$ is geometrically integral of genus 1.}
  \end{center}
\end{figure}

\begin{notation}\label{E en D}
For a point $R\in \mathcal{F}_0$, we denote by $E_R$ the corresponding elliptic curve and by $D_R$ the point on it, both as defined in Remark \ref{3section}. 
\end{notation}

Let $\eta$ be the generic point of $\mathcal{F}$, that is, $\eta$ is the point given by $(\tilde{x}:\tilde{y}:z_0:1)$ over the function field $k(\mathcal{F})=k(\tilde{x},\tilde{y})=\mbox{Frac}(k[x,y]/(y^2-x^3-az_0^6-bz_0^3-c))$ of $\mathcal{F}$. Let $C_{\eta}\subset\mathbb{P}_{k(\mathcal{F})}(2,3,1,1)$ be the corresponding curve given by (\ref{C}). From Proposition~\ref{singularpointsC} and Remark \ref{genus1} it follows that $C_{\eta}$ is geometrically integral of genus~1. Let $E_{\eta}$ be the corresponding elliptic curve with point $D_{\eta}$ as in Notation \ref{E en D}. 

\vspace{11pt}

In Lemma \ref{shortweierstrassandpoint} we give a Weierstrass model for the curve $E_{\eta}$, which we use in Proposition \ref{positiverank}. 
Recall that $a,b,c$, and $z_0$ are fixed elements in $k$, and $a,c,z_0$ are non-zero. We define the polynomial \begin{equation}\label{defnq}q=q_1q_2q_3q_4
\end{equation} in the polynomial ring $k[\tilde{x}]$ as follows. 
\begin{align*}
q_1&=\tilde{x};\\
q_2&=-\tilde{x}^6 +2z_0^3\left(4az_0^3+b\right)\tilde{x}^3+\left(4ac-b^2\right)z_0^6;\\
q_3&=\tilde{x}^6+8\left(az_0^6-c\right)\tilde{x}^3+8\left(2a^2z_0^{12}+3abz_0^9+\left(2ac+b^2\right)z_0^6+bcz_0^3\right);\\
q_4&=29\tilde{x}^{12}+(40c+24az_0^6)\tilde{x}^9+8(12a^2z_0^{12}+9abz_0^9+(18ac-5b^2)z_0^6-5bcz_0^3-2c^2)\tilde{x}^6 \\
&\;\;\;\;\;\;+32(4a^3z_0^{18}+9a^2bz_0^{15}+(5ab^2+
12a^2c)z_0^{12} + 14abcz_0^9 + (b^2c + 8ac^2)z_0^6 + bc^2z_0^3)\tilde{x}^3\\
&\;\;\;\;\;\;+16((4a^3c - a^2b^2)z_0^{18} + (8a^2bc - 2ab^3)z_0^{15}  + (8a^2c^2 + 2ab^2c - b^4)z_0^{12}  + (8abc^2- 2b^3c)z_0^9 \\
&\;\;\;\;\;\;\;\;\;\;\;\;\;\;\;\;\;\;\;\;\;\;\;\;\;\;\;\;\;\;\;\;\;\;\;\;\;\;\;\;\;\;\;\;\;\;\;\;\;\;\;\;\;\;\;\;\;\;\;\;\;\;\; \;\;\;\;\;\;\;\;\;\;\;\;\;\;\;\;\;\;\; \;\;\;\;\;\;\;\;\;\;\;\;\;\;\;\;\;\;\; \;\;\;\;\;\;\;\;\;\;\;\;\;\;\;\;\;\;\;   + (4ac^3 -b^2c^2)z_0^6).
\end{align*}

\begin{lemma}\label{shortweierstrassandpoint}
There exists a unique polynomial $\delta\in k[\tilde{x}]$ with leading term $-27cz_0^{48}\tilde{x}^{81}$ such that the following holds. There is an isomorphism $\omega$ between the elliptic curve $E_{\eta}$ and the curve with Weierstrass equation given by \begin{equation}\label{defnellcurve}\gamma^2=\xi^3+\delta,
\end{equation}
such that the denominators in the defining equations of $\omega$ and $\omega^{-1}$ are all of the form $2^p3^r(q_2q_4)^s$ for positive integers $p,r,s$. \\
The point on (\ref{defnellcurve}) corresponding to the point $D_{\eta}$ on $E_{\eta}$ is given by \begin{equation}\label{defnD_R}\omega(D_{\eta})=(\xi_{D},\gamma_{D}),
\end{equation}
where $$\xi_{D}=\tfrac{\alpha}{(q_1q_3)^2},\;\;\;\;\;\gamma_{D}=\tfrac{\beta}{(q_1q_3)^3},$$ are rational functions, and $\alpha$ and $\beta$ are polynomials in $k[\tilde{x}]$, with leading terms $\tfrac14z_0^{16}\tilde{x}^{42}$ and $\tfrac18z_0^{24}\tilde{x}^{63},$ respectively.
\end{lemma}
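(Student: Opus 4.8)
The plan is to make the isomorphism $\omega$ completely explicit by chasing through the two reductions already indicated in Remark \ref{kollar} and Remark \ref{affinespace}, keeping careful track of which polynomials appear in the denominators. First I would recall that, by Remark \ref{affinespace}, the curve $C_{\eta}\cap\mathbb{A}^3$ is isomorphic (via the projection $p$ with explicit rational inverse $r$) to the affine plane curve $H_R=0$ in the variables $X,T$, where $R=\eta=(\tilde x:\tilde y:z_0:1)$; substituting $x_R=\tilde x$, $y_R^2=\tilde x^3+az_0^6+bz_0^3+c$, $z_R=z_0$ into (\ref{H}) gives a sextic in $T$ whose $X^3$-coefficient is $4y_R^2z_0^6=4(\tilde x^3+az_0^6+bz_0^3+c)z_0^6$. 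I would then solve for $X$: since $H_R$ is linear in... no, it is cubic in $X$ but only degree $\le 2$ in $X$ after we notice the structure coming from the tangent-plane description. More robustly: $C_{\eta}$ is birational to the plane cubic $C_Q$ on the cubic surface $T$ of Remark \ref{kollar}, obtained by intersecting $T:\ y'^2=x'^3+a+b\omega+c\omega^2$ with its tangent plane at $Q=f(\eta)$, where $f$ sends $(\tilde x',\tilde y',w')$ to $(x',y',\omega)=(\tilde x'/z_0^3,\tilde y'/z_0^3,w'^3)$ roughly; one must be careful with the weighting, so the cleanest route is the direct elimination from $F=G=0$ that the authors have already carried out, landing on $H_R=0$.

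Next I would put the plane curve $H_R=0$ into Weierstrass form. It has the three double points $R,\sigma(R),\sigma^2(R)$ and one further simple point $Q$ (the third intersection with $\mathcal{F}$), by Remark \ref{3section}; projecting from one of the double points, or applying the standard Nagell-type algorithm with the rational point $Q$, produces a cubic in Weierstrass form, and since $S$ (hence $C_{\eta}$) has $j$-invariant $0$ this reduces further to $\gamma^2=\xi^3+\delta$ after clearing denominators and absorbing square/cube factors. The key bookkeeping point is: every step (the inverse section $r$, the elimination of $X$, the change of coordinates centred at $Q$, the final Weierstrass normalisation) introduces denominators that are products of $2$, $3$, the leading coefficient $4y_R^2z_0^6=4q_1q_2$-type factors — here one checks that $y_R^2=\tilde x^3+az_0^6+bz_0^3+c$ factors, together with the various discriminant-type quantities, precisely into the polynomials $q_1,q_2,q_3,q_4$ defined before the lemma. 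So I would verify the factorisation claims $q=q_1q_2q_3q_4$ record exactly the locus where one of the coordinate changes degenerates, and that the composite $\omega$ and $\omega^{-1}$ therefore have denominators of the asserted shape $2^p3^r(q_2q_4)^s$. The leading-term assertions (for $\delta$, $\alpha$, $\beta$, and the claim that $\delta$'s leading term is $-27cz_0^{48}\tilde x^{81}$, $\xi_D$ has numerator of leading term $\tfrac14 z_0^{16}\tilde x^{42}$, etc.) are then read off by tracking only the top-degree terms through each substitution — a finite, mechanical computation best verified in \texttt{magma}, exactly as the authors found $C_R$ in the first place.

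Uniqueness of $\delta$ is the easy part: two Weierstrass equations $\gamma^2=\xi^3+\delta$ and $\gamma^2=\xi^3+\delta'$ over $k(\mathcal{F})$ that are isomorphic as elliptic curves differ by $\delta'=u^6\delta$ for some $u\in k(\mathcal{F})^*$; pinning down the leading term $-27cz_0^{48}\tilde x^{81}$ (in particular its degree $81$ and its coefficient) forces $u$ to be a unit of the right degree, and combined with the normalisation of $\omega$ being an honest isomorphism of schemes (not merely a $k(\mathcal{F})$-isogeny) this rigidifies $u$ to $1$. I would state this first, then construct $\omega$, then compute $\omega(D_\eta)$ by plugging the image of $Q$-translate-of-$(\sigma(Q)+\sigma^2(Q))$ through the same chain of maps; the point $D_\eta$ is defined in Remark \ref{3section} as the sum $\sigma(Q)+\sigma^2(Q)$ on $E_\eta$, so computing $\omega(D_\eta)$ amounts to applying the group law on $\gamma^2=\xi^3+\delta$ to the images of $\sigma(Q)$ and $\sigma^2(Q)$, whose coordinates are obtained from those of $Q$ by the explicit action (\ref{definitionsigma}) of $\sigma$.

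The main obstacle is purely computational bulk rather than conceptual: carrying a sextic-in-$T$, cubic-in-$X$ plane model with coefficients that are themselves degree-up-to-$12$ polynomials in $\tilde x$ through three successive birational changes of variable, and then certifying that the accumulated denominator is \emph{exactly} supported on $\{q_2=0\}\cup\{q_4=0\}$ (no spurious extra factors, and no missing ones) and that the leading terms come out as claimed. I expect this to require a symbolic-algebra computation, and in the write-up I would present the final explicit formulas for $\delta$, $\alpha$, $\beta$ (or point to an ancillary file), verify by direct substitution that $\gamma_D^2=\xi_D^3+\delta$ holds as an identity in $k(\tilde x)$, check the three leading-term normalisations by a one-line degree/leading-coefficient argument, and deduce the denominator shape from the factorisations of $q_2$ and $q_4$. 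No new ideas beyond those already in Remarks \ref{kollar} and \ref{affinespace} are needed — it is a matter of executing and recording the reduction with enough care that Proposition \ref{positiverank} (which uses these leading terms to force non-torsion-ness of the section) can be run.
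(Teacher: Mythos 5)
Your proposal takes essentially the same approach as the paper: both pass to the affine plane model $H_R=0$ from Remark \ref{affinespace}, compute the Weierstrass form and the coordinates of $D_\eta$ by explicit (\texttt{magma}-assisted) symbolic algebra, track the denominators and leading terms through the coordinate changes, and deduce uniqueness of $\delta$ from the fact that two short Weierstrass equations $\gamma^2=\xi^3+\delta$, $\gamma^2=\xi^3+\delta'$ over $k(\mathcal{F})$ are isomorphic only when $\delta'=\upsilon^6\delta$, which the leading-term normalisation then rigidifies. The paper likewise defers the denominator and leading-term bookkeeping to \texttt{magma}, so no substantive difference in method.
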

\begin{proof}The \texttt{magma} code that is used in this proof can be found in \cite{magmachapter2}. 
Let $Q$ be the third point of intersection of $C_{\eta}$ with the fiber of $\eta$ on the base change $\mathcal{E}\times_kk(\mathcal{F})$ over $\mathbb{P}^1\times_kk(\mathcal{F})$. Write $Q=(x_Q:y_Q:z_0:1)$, with $x_Q,y_Q\in k(\mathcal{F})$. 
Then $Q$ lies in $C_{\eta}\cap\left(\mathbb{A}^3\times_kk(\mathcal{F})\right)$, which is isomorphic to the curve $C_{\eta}^1$ in $\mathbb{A}^2\times_kk(\mathcal{F})$ defined by $H_{\eta}=0$, where $H_{\eta}$ is given in (\ref{H}) after substituting $R$ by~$\eta$. 
We find $x_Q$ by substituting $T=z_0$, $c=\tilde{y}^2-\tilde{x}^3-az_0^6-bz_0^3$ in (\ref{H}) and factorizing in $k(\mathcal{F})[X]$, which yields $$x_Q=\frac{9\tilde{x}^4-8\tilde{x}\tilde{y}^2}{4\tilde{y}^2}.$$ We conclude that the elliptic curve $E_{\eta}$ as defined in Remark \ref{3section} is isomorphic to the curve $\left(\tilde{C}_{\eta}^1,\left(\tfrac{9\tilde{x}^4-8\tilde{x}\tilde{y}^2}{4\tilde{y}^2},z_0\right)\right)$, where $\tilde{C}^1_{\eta}$ is the normalization of $C^1_{\eta}$. With \texttt{magma} we compute a Weierstrass model for $E_{\eta}$, which is given by \begin{equation}\label{weierstrassinproof}\gamma'^2=\xi'^3+\frac{(3\cdot2^5)^6\delta}{(q_2q_4)^6},
\end{equation} where $\delta$ is a polynomial in $k[\tilde{x}]$ with leading term $-27cz_0^{48}\tilde{x}^{81}$. We verify with \texttt{magma} that the denominators in the defining equations of the isomorphism $\omega_1$ between $E_{\eta}$ and the curve (\ref{weierstrassinproof}), as well as those of $\omega_1^{-1}$, are all of the form $2^{p'}(q_2q_4)^{s'}$ for positive integers $p',s'$. The change of coordinates $$\xi'=\tfrac{(3\cdot2^5)^2}{(q_2q_4)^2}\xi,\;\;\;\;\;\;\;\; \gamma'=\tfrac{(3\cdot2^5)^3}{(q_2q_4)^3}\gamma,$$ induces an isomorphism $\omega_2$ between the curve (\ref{weierstrassinproof}) and the curve defined by \begin{equation}\label{weierstrassinproofdelta}\gamma^2=\xi^3+\delta.
\end{equation} 
We conclude that $\omega=\omega_2\circ\omega_1$ is an isomorphism between $E_{\eta}$ and the curve (\ref{weierstrassinproofdelta}), and the denominators in the defining equations of $\omega$ and $\omega^{-1}$ are all of the form $2^p3^r(q_2q_4)^s$ for positive integers $p,r,s$. \\
If $\delta'$ was another polynomial in $k[\tilde{x}]$ such that $E_{\eta}$ were isomorphic to the curve given by $\gamma^2=\xi^3+\delta'$, then we would have $\delta'=\upsilon^6\delta$ for some $\upsilon\in k(\mathcal{F})$, hence $\delta'$ would not have leading term $-27cz_0^{48}\tilde{x}^{81}$. We conclude that $\delta$ is the unique polynomial with leading term $-27cz_0^{48}\tilde{x}^{81}$ such that $E_{\eta}$ is isomorphic to the curve with Weierstrass model (\ref{weierstrassinproofdelta}).
With \texttt{magma} we compute the sum $D$ on the curve (\ref{weierstrassinproofdelta}) of the points corresponding to $\left(\zeta_3^2\tfrac{9\tilde{x}^4-8\tilde{x}\tilde{y}^2}{4\tilde{y}^2},\zeta_3z_0\right)$ and $\left(\zeta_3\tfrac{9\tilde{x}^4-8\tilde{x}\tilde{y}^2}{4\tilde{y}^2},\zeta_3^2z_0\right)$ on $C_{\eta}$. We find $D=(\xi_{D},\gamma_{D})$ with $\xi_{D}=\tfrac{\alpha}{(q_1q_3)^2}$, $\gamma_{D}=\tfrac{\beta}{(q_2q_3)^3}$, where $\alpha,\beta$ are elements in $k[\tilde{x}]$ with leading terms given by $\tfrac14z_0^{16}\tilde{x}^{42}$ and $\tfrac18z_0^{24}\tilde{x}^{63}$, respectively. \end{proof}

\begin{remark}\label{specialization}
The curve in (\ref{defnellcurve}) over the function field $k(\mathcal{F})$ of $\mathcal{F}$ gives rise to a unique relatively minimal elliptic surface $\rho\colon \mathcal{C}\longrightarrow\mathcal{F}$ over $\mathcal{F}$  \cite[Theorem 5.19]{MW19}, such that the generic fiber of $\rho$ is isomorphic to $E_{\eta}$. Recall the polynomial $q$ in (\ref{defnq}). From Lemma \ref{shortweierstrassandpoint} it follows that for every $R=(x_R:y_R:z_0:1)\in\mathcal{F}_0$ with $q(x_R)\neq0$, the fiber of $\rho$ above $R$ is isomorphic to the curve $E_R$ as in Notation \ref{E en D}. Moreover, the point $D_{\eta}$ on $E_{\eta}$ gives rise to a section $D$ on $\mathcal{C}$. See Figure \ref{fiberfibration}. In Proposition \ref{positiverank}, we show that $D$ is a non-torsion section.
\end{remark}

\begin{figure}[!h]
\captionsetup{width=0.8\textwidth}
\begin{center}
 \includegraphics[width=13cm]{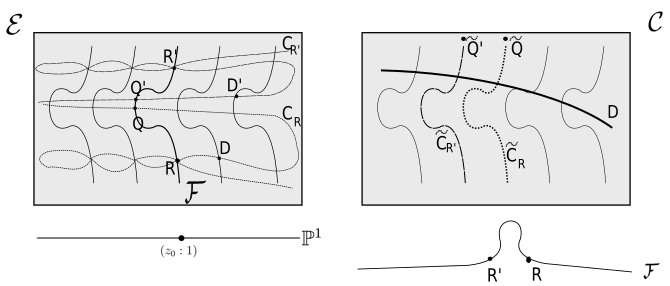}
  \caption{\label{fiberfibration}\textsl{Left:} two points $R,R'\in\mathcal{F}_0(k)$, with corresponding curves $C_R,C_{R'}$. \textsl{Right:} the fibration on $\mathcal{F}$ with two fibers that are the normalizations of the multisections $C_R$, $C_{R'}$, and the section $D$. }
  \end{center}
\end{figure}

\begin{proposition}\label{positiverank}
If $k$ has characteristic 0, then the point $D_{\eta}$ is non-torsion on $E_{\eta}$. 
\end{proposition}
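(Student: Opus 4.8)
The plan is to show $D_\eta$ is non-torsion by a specialization argument: if $D_\eta$ were torsion on $E_\eta$, then its order would be bounded (in fact $E_\eta$ has $j$-invariant $0$, so the torsion is very restricted), and for every $R \in \mathcal{F}_0(k)$ with $q(x_R) \neq 0$ the specialized point $D_R$ on $E_R$ would also be torsion of bounded order. I would then derive a contradiction by exhibiting a single point $R_0 \in \mathcal{F}_0(k)$ — namely the non-torsion point guaranteed by the hypothesis of Theorem \ref{thm1}, or one built from it using that $\mathcal{F}$ has positive rank — for which $D_{R_0}$ is provably non-torsion, or of order exceeding the bound. The cleanest incarnation: work over $k(\mathcal{F})$ directly and use the leading-term information from Lemma \ref{shortweierstrassandpoint}. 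The curve $\mathcal{C} \to \mathcal{F}$ of Remark \ref{specialization} is a non-isotrivial elliptic surface (one checks its $j$-invariant, computed from $\delta$, is non-constant in $\tilde x$), so by the theorem of Shioda--Tate / the injectivity of specialization, or more elementarily by the Lutz--Nagell-type argument below, torsion of $\mathcal{C} \to \mathcal{F}$ injects into the torsion of any smooth fiber.

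The concrete computational step I would carry out is this. The Weierstrass equation (\ref{defnellcurve}) is of the form $\gamma^2 = \xi^3 + \delta$ with $\delta \in k[\tilde x]$ of degree $81$ and leading coefficient $-27cz_0^{48}$, and the section $D$ has $\xi$-coordinate $\alpha/(q_1q_3)^2$ with $\alpha$ of degree $42$ and leading coefficient $\tfrac14 z_0^{16}$, while $(q_1q_3)^2$ has degree $2(1+6) = 14$; so $\xi_D = \alpha/(q_1q_3)^2$ is a rational function of degree $42$ in numerator and $14$ in denominator, hence not a polynomial and with a genuine pole. For a curve $y^2 = x^3 + \delta(t)$ over $k(t)$ with $\delta$ a polynomial, a torsion point must have $\xi$-coordinate a polynomial in $t$ (an integral point, by the function-field analogue of the Lutz--Nagell theorem / Néron's model computation at the place $t = \infty$, using that $v_\infty(\delta)$ is odd or that the fiber at infinity is of a type forcing integrality). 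Here $v_\infty(\delta) = -81$ and $\deg \delta = 81$, which after clearing to a minimal model at $t=\infty$ pins down the reduction type; in any case $\xi_D$ has a pole of order $42 - 14 = 28$ at finite places (the zeros of $q_1q_3$) rather than being integral there, and I would show these poles are incompatible with $D_\eta$ being torsion by checking the reduction types of $\mathcal{C} \to \mathcal{F}$ at the zeros of $q_1 q_3$ — at such a place a torsion section must either meet the zero section or lie in a component group that is too small to absorb a pole of this order.

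The main obstacle I expect is making the integrality/non-integrality argument fully rigorous at the bad fibers of $\mathcal{C} \to \mathcal{F}$: one must determine the Kodaira types of $\mathcal{C}$ at the roots of $q_1, q_2, q_3, q_4$ (and at $\tilde x = \infty$) and check that the component groups are small enough — for $j = 0$ surfaces the only possibilities are $II, III, IV, I_0^*, IV^*, III^*, II^*$ plus $I_n, I_n^*$, and the torsion subgroup is a subgroup of $(\Z/2)^2$, $\Z/3$, $\Z/4$, or $\Z/6$ — so that a section with a pole of the computed order at some place cannot be torsion. Equivalently, and perhaps more cleanly, I would instead just specialize: pick the hypothesized point $P \in \mathcal{F}(k)$ of infinite order, note $\mathcal{F}_0(k)$ is infinite hence contains some $R$ with $q(x_R) \neq 0$, and compute on the \emph{specific} elliptic curve $E_R : \gamma^2 = \xi^3 + \delta(x_R)$ that $D_R = (\alpha(x_R)/(q_1q_3)^2(x_R),\ \beta(x_R)/(q_1q_3)^3(x_R))$ is non-torsion — e.g. by a height or a mod-$p$ order computation — which is a finite check once the auxiliary point is fixed; then injectivity of specialization for the non-isotrivial surface $\mathcal{C} \to \mathcal{F}$ forces $D_\eta$ non-torsion. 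I would present the argument in whichever of these two forms keeps the case analysis shortest, most likely the second, delegating the one explicit verification to the \texttt{magma} code already cited.
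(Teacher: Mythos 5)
Your instincts --- either specialize, or exploit local behaviour at bad fibres --- are reasonable starting points, but neither of your two sketches is yet a proof, and both miss the one move that makes the argument both short and uniform in the parameters $a,b,c,z_0$ and in the field $k$.

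The version you say you would most likely write up (pick a concrete $R\in\mathcal{F}_0(k)$ with $q(x_R)\neq0$, check numerically that $D_R$ is non-torsion, then invoke injectivity of specialization) does not actually prove the proposition: the coefficients $a,b,c$ and the base point $z_0$ are arbitrary elements of an arbitrary field of characteristic $0$, so there is no single curve $E_R$ to run a height or mod-$p$ computation on. A certificate for one choice of surface and point says nothing about another, and you are forced back to a uniform argument anyway.

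Your other sketch is closer to what the paper does, but it targets the wrong places and contains a computational slip. The degrees $42$ and $14$ of $\alpha$ and $(q_1q_3)^2$ give $\xi_D$ a pole of order $28$ at $\tilde{x}=\infty$, not at the zeros of $q_1q_3$; and it is precisely the place $\tilde{x}=\infty$, which you mention in passing but then abandon, that settles everything without any classification of Kodaira types. Setting $\psi=1/\tilde{x}$ and rescaling $\xi'=\psi^{28}\xi$, $\gamma'=\psi^{42}\gamma$, the Weierstrass equation becomes $\gamma'^2=\xi'^3+\psi^{84}\delta$; since $\deg\delta=81<84$, the rescaled coefficient vanishes (to order at least $3$) at $\psi=0$, so the fibre over $\infty$ is the cuspidal cubic $\gamma'^2=\xi'^3$. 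The leading terms you already quote from Lemma~\ref{shortweierstrassandpoint} ($\alpha\sim\tfrac14z_0^{16}\tilde{x}^{42}$, $\beta\sim\tfrac18z_0^{24}\tilde{x}^{63}$, $(q_1q_3)^2\sim\tilde{x}^{14}$) then force the section $D$ to meet this fibre at $(\tfrac14z_0^{16},\tfrac18z_0^{24})$, which, since $z_0\neq0$, is a nonsingular affine point distinct from the origin. The group of nonsingular $k$-points of a cuspidal cubic is the additive group of $k$, which is torsion-free in characteristic $0$; so the reduction of $D$ at $\infty$ already has infinite order, and $D_\eta$ cannot be torsion. No analysis of the reduction types at the roots of $q_1,\dots,q_4$ is needed, and no auxiliary numerical check.
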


\begin{proof}
It suffices to show that the section $D$ intersects a fiber of $\rho\colon \mathcal{C}\longrightarrow\mathcal{F}$ in a non-torsion point. We use the Weierstrass equation for the elliptic surface $\mathcal{C}$ given in Lemma \ref{shortweierstrassandpoint}, and look at the fiber above the point at infinity on $\mathcal{F}$ by setting $\psi=\tfrac{1}{\tilde{x}}$, multiplying by factors of $\psi$ to obtain polynomials in $k[\psi]$, and evaluating at $\psi=0$. \\
After setting $\psi=\tfrac{1}{\tilde{x}}$ and applying the change of coordinates $\xi'=\psi^{28}\xi,\;\gamma'=\psi^{42}\gamma$ in (\ref{defnellcurve}) we obtain $\gamma'^2=\xi'^3+\delta',$ where $\delta'=\psi^{84}\delta$. 
Since the leading term of $\delta$ in $k[\tilde{x}]$ has degree 81, the rational function $\delta'$ is in fact a polynomial in $k[\psi]$, and it is divisible by $\psi^3$. Therefore, evaluating $\delta'$ at $\psi=0$ gives $0$, and the fiber above infinity on $\mathcal{F}$ is given by $\mathcal{F}_{\infty}:\gamma'^2=\xi'^3$. \\
The section $D$ intersects $\mathcal{F}_{\infty}$ in a point $(\xi_{\infty},\gamma_{\infty})$. The leading coefficient of $(q_1q_3)^2$ equals $\tilde{x}^{14}$, so we have $$\xi_{D}=\frac{\alpha(\frac{1}{\psi})}{(q_1(\frac{1}{\psi})q_3(\frac{1}{\psi}))^2}=\frac{\psi^{14}\alpha(\frac{1}{\psi})}{\psi^{14}(q_1(\frac{1}{\psi})q_3(\frac{1}{\psi}))^2},$$ where the denominator of the latter is a polynomial $q'$ in $k[\psi]$ with constant coefficient 1, and the numerator, as polynomial in $\tilde{x}$, has leading term $\frac14z_0^{16}\tilde{x}^{28}$. We conclude that we have $$\xi_{\infty}=(\psi^{28}\xi_D)|_{\psi=0}=\frac{\frac14z_0^{16}}{q'(0)}=\frac14z_0^{16}.$$ Similarly, we have $\gamma_{\infty}=\frac18z_0^{24}$. So $D$ intersects $\mathcal{F}_{\infty}$ in the non-zero point $(\frac14z_0^{16},\frac18z_0^{24})$. Since the group $\mathcal{F}_{\infty}^{\rm{ns}}(k)$ of non-singular $k$-rational points on $\mathcal{F}_{\infty}$ is isomorphic to the additive group of $k$, and we have char $k=0$, we conclude that $(\frac14z_0^{16},\frac18z_0^{24})$ is non-torsion on $\mathcal{F}_{\infty}$.\end{proof}

\section{Proof of Theorem \ref{thm1}}\label{proofmainthm}
In this section we prove Theorem \ref{thm1}. Let $a, b, c$, $k$, $S$, and $\mathcal{E}$ be as in the theorem; in particular, $k$ is now a field of characteristic 0. Recall Notation \ref{setup}. 

\vspace{11pt}

We use the following theorem of Colliot-Th\'{e}l\`{e}ne, which gives us a stronger version of Merel's theorem for bounding the torsion in a family of elliptic curves. The proof can be found in the Appendix at the end of this paper. The same result is also mentioned in Footnote 1 of the paper \cite{CaTa} by Cadoret and Tamagawa.

\begin{theorem}\label{CT}[Th\'{e}or\`{e}me in the Appendix]
Let $l$ be a field that is finitely generated over $\mathbb{Q}$. There exists an integer $N=N(l)$ such that if $C/l$ is a geometrically integral $l$-variety, and $E/C$ a smooth family of elliptic curves, then for all points $P\in C(l)$, the order of a torsion point on the fiber $E_P(l)$ is at most $N$.
\end{theorem}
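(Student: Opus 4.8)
The plan is to reduce the assertion to Merel's theorem on uniform boundedness of torsion over number fields of bounded degree, by a spreading-out-and-specialization argument. First observe that the statement is equivalent to the following: there is an integer $N=N(l)$ such that for every elliptic curve $E$ over $l$, every torsion point of $E(l)$ has order at most $N$. Indeed, for a family $E/C$ and a point $P\in C(l)$ the fiber $E_P$ is simply an elliptic curve over $l$, so the family structure plays no role; conversely $C=\mathrm{Spec}\,l$ with $E/l$ recovers the general case. So it suffices to bound $l$-rational torsion of elliptic curves over $l$, uniformly in the curve.

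Fix a finitely generated $\mathbb{Q}$-subalgebra $R\subseteq l$ which is a domain with $\mathrm{Frac}(R)=l$, and let $d=\mathrm{trdeg}_{\mathbb{Q}}\,l$. By Noether normalization choose a finite injection $\mathbb{Q}[x_1,\dots,x_d]\hookrightarrow R$, and by generic freeness, replacing $R$ by a localization $R[1/f]$ for a suitable nonzero $f\in\mathbb{Q}[x_1,\dots,x_d]$, assume $R$ is free of some rank $D$ over $A:=\mathbb{Q}[x_1,\dots,x_d][1/f]$; the integer $D$ depends only on $l$ and these choices. Now suppose $E/l$ has a point $P$ of exact order $m$. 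By spreading out, after shrinking $\mathrm{Spec}\,R$ to a smaller nonempty open, there is a nonempty open $U\subseteq\mathrm{Spec}\,R$ over which $E$ extends to an abelian scheme $\mathcal{E}\to U$ carrying a section, still denoted $P$, that extends the given point and satisfies $mP=0$.

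The key step is to specialize $(E,P)$ to an elliptic curve over a number field of degree at most $D$. Under the finite morphism $\mathrm{Spec}\,R\to\mathrm{Spec}\,A$, the closed set $\mathrm{Spec}\,R\setminus U$ maps to a proper closed subset of $\mathrm{Spec}\,A$; since $\mathbb{Q}$-points are Zariski dense in the open subscheme $\mathrm{Spec}\,A\subseteq\mathbb{A}^d_{\mathbb{Q}}$, choose such a $\mathbb{Q}$-point $a$ lying outside that subset, so that the entire fiber of $\mathrm{Spec}\,R$ over $a$ is contained in $U$. By freeness of $R$ over $A$, this fiber is a nonzero finite $\mathbb{Q}$-algebra of $\mathbb{Q}$-dimension $D$, hence has a closed point $u\in U$ with $[\kappa(u):\mathbb{Q}]\le D$. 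Then $\mathcal{E}_u$ is an elliptic curve over the number field $\kappa(u)$; since $\kappa(u)$ has characteristic zero, specialization at $u$ is injective on torsion, so $P_u\in\mathcal{E}_u(\kappa(u))$ still has exact order $m$. Thus we have produced an elliptic curve over a number field of degree $\le D$ with a rational point of order $m$.

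Finally, Merel's theorem provides a bound $B(D)$ on the order of a torsion point of an elliptic curve over any number field of degree at most $D$; hence $m\le B(D)$, and $N(l):=B(D)$ works. The only external input is Merel's uniform boundedness theorem over number fields of bounded degree; everything else — spreading $E/l$ out to an abelian scheme over an open of a model of $l$, Noether normalization together with generic freeness to produce closed points of bounded residue degree, and injectivity of specialization on torsion in characteristic zero — is standard. The step I expect to need the most care is ensuring that a single well-chosen closed point $u$ simultaneously lies in the spreading-out locus $U$ and has residue field of degree $\le D$ over $\mathbb{Q}$; this is precisely what the combination of Noether normalization, generic freeness, and density of $\mathbb{Q}$-points in affine space delivers.
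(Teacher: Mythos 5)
Your proof is correct and follows essentially the same route as the appendix: spread out over a finitely generated model of $l$ sitting finitely over (an open of) affine space, produce a closed point of residue degree bounded by the degree of that model above a $\mathbb{Q}$-point, use that exact order of torsion is preserved under specialization in characteristic zero (the finite étale torsion argument, which is the paper's Lemme), and conclude with Merel's uniform bound for number fields of bounded degree. Your preliminary reduction to a single elliptic curve over $l$ and the use of Noether normalization plus generic freeness instead of a finite étale presentation are only cosmetic variations on the paper's argument.
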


\begin{proofofmainthm}
By assumption, there is a point $P\in S(k)$ such that the corresponding point $P_{\mathcal{E}}$ on $\mathcal{E}$ lies on a smooth fiber $\mathcal{F}_P$ above $(z_P:1)\in\mathbb{P}^1$ for some $z_P\in k$ non-zero, and $P_{\mathcal{E}}$ is non-torsion on $\mathcal{F}_P$. From Proposition \ref{multisection} it follows that at least one of the following holds. 
\begin{itemize}
\item[](i) There is a point $R\in\mathcal{F}_P(k)$ such that the curve $C_R$ as in (\ref{C}) contains a section defined over $k$;
\item[](ii) there is a point $R\in\mathcal{F}_P(k)$ such that the curve $C_R$ as in (\ref{C}) is geometrically integral of genus 0;
\item[](iii) there is an open subset $\mathcal{F}_{P,0}$ of $\mathcal{F}_P$ such that for every point $R\in\mathcal{F}_{P,0}(k)$, the curve $C_R$ is geometrically integral of genus 1.
\end{itemize}
Note that in case (i) we are done by Remark \ref{sectionimpliesdensity}. In case (ii), the normalization $n\colon \tilde{C}_R\longrightarrow C_R$ gives a smooth curve of genus 0. Since $R$ is not a triple point on $C_R$ (Proposition \ref{singularpointsC}), the latter contains a rational point given by the unique other point in the intersection of $C_R$ with $\mathcal{F}_P$, hence $C_R$ contains infinitely many $k$-rational points. Consider the base change $\nu_R\colon \mathcal{E} \times_{\mathbb{P}^1} \tilde{C}_R \longrightarrow \tilde{C}_R$ of $\nu$, which is an elliptic fibration with section $\tilde{C}_R\longrightarrow \mathcal{E} \times_{\mathbb{P}^1} \tilde{C}_R,\; p\longmapsto (n(p),p)$. The latter has infinite order \cite[Theorem 6.4]{SL14}, and is defined over $k$, so the set $(\mathcal{E} \times_{\mathbb{P}^1} \tilde{C}_R)(k)$ is dense in $\mathcal{E} \times_{\mathbb{P}^1} \tilde{C}_R$ by Remark \ref{sectionimpliesdensity}. Since $\mathcal{E} \times_{\mathbb{P}^1} \tilde{C}_R$ maps dominantly to $\mathcal{E}$, and hence to $S$, the density of $S(k)$ in $S$ follows. If we are in case (iii), it follows from Remark \ref{specialization} and Proposition \ref{positiverank} that there is an elliptic fibration $\rho\colon\mathcal{C}_P\longrightarrow \mathcal{F}_P$ such that almost all fibers are the normalizations of a 3-section of $\mathcal{E}$, and such that $\rho$ admits a section defined over $k$ of infinite order. From Remark \ref{sectionimpliesdensity} it follows that $\mathcal{C}_P(k)$ is Zariski dense in $\mathcal{C}_P$. Since almost all fibers of $\rho$ are normalizations of 3-sections of $\mathcal{E}$, the surface $\mathcal{C}_P$ maps dominantly to $\mathcal{E}$, and hence to $S$. It follows that $S(k)$ is dense in $S$ as well. This proves the first statement of the theorem.\\
Now assume that $k$ is finitely generated over $\mathbb{Q}$, and that $S(k)$ is dense in $S$. Since every smooth fiber of $\mathcal{E}$ is an elliptic curve over $k$, there is an upper bound $N=N(k)$ such that on all the fibers, all the torsion points have order at most $N$ (Theorem \ref{CT}). Let $m\leq N$ be an integer, and let $T_m$ be the zero locus of the $m$-th division polynomial $\psi_m\in k[x,y,t]$ of the generic fiber $E$ of $\mathcal{E}$, which is an elliptic curve over the function field $k(t)$ of $\mathbb{P}^1$. We have $\psi_m\in k[x,t]$, and for any $\tau\in k$, the polynomial $\psi_m(x,\tau)\in k[x]$ has degree $m^2$ \cite[Exercise III.3.7]{Sil09}. So $T_m$ is an $m^2$-section of $\mathcal{E}$. If $S$ did not contain a point $P$ as in the theorem, then $S(k)$ would be contained in the union of the torsion locus $\cup_{m\leq N}T_m$ with the two fibers $(1:0)$ and $(0:1)$ and the singular fibers, which is a strict closed subset of $S$, contradicting the assumption that $S(k)$ is dense in $S$. This finishes the proof. \qed
\end{proofofmainthm}

\section{Examples}\label{examples}
We conclude this paper by giving examples where we prove the density of rational points on specific surfaces. The rank of the Mordell--Weil group over $\mathbb{Q}$ of the surfaces in Examples \ref{exsurface1} and \ref{exsurface2} is 0 by \cite[Corollary 2.4]{DN}, so in these cases the density of the $\mathbb{Q}$-rational points can not be proven by the existence of a section over $\mathbb{Q}$ (see also Remark \ref{sectionimpliesdensity}). The surface in Example \ref{ex3} has Mordell-Weil rank 2 over $\mathbb{Q}$ \cite[Corollary 2.4]{DN}, so the density of the set of rational points is implied by Remark~\ref{sectionimpliesdensity}. For this surface, we show how our method can construct a rational section as a component of the curve $C_R$ for a certain point $R$ (this is one of the cases in Proposition \ref{multisection}).

\begin{example}\label{exsurface1}Let $k$ be field of characteristic 0 and let $S$ be the surface given by  
$$y^2=x^3+6(27z^6+w^6).$$Note that $S$ does not satisfy the conditions of \cite[Theorem 1.1]{VA11} since $3\cdot27$ is a square and gcd$(6\cdot27,6)\neq1$, hence the density of $\mathbb{Q}$-rational points could not be proven by V\'{a}rilly-Alvarado \cite[Example 7.2]{VA11}. However, the fiber $\mathcal{E}_{(1:1)}$ of the anticanonical elliptic surface $\mathcal{E}$ above $(1:1)$ is smooth, and with \texttt{magma} we find that this fiber has rank 2. So $S$ contains a point that lies on a smooth fiber of $\mathcal{E}$ and has infinite order, hence $S(k)$ is dense in $S$ by Theorem \ref{thm1}.\\
We illustrate this by constructing a 3-section as in (\ref{C}). With \texttt{magma} we find two generators for $\mathcal{E}_{(1:1)}(\mathbb{Q})$, given by $P_1=(1:13:1:1)$ and $P_2=(22:104:1:1)$. The curve $C_{P_1}$ is cut out from $S$ by $3xz-26y+323z^3+12w^3$, and it has geometric genus 1. We find $C_{P_1}\cap\mathcal{E}_{(1:1)}=\{P_1,Q_1\}$ with $Q_1=\left(-\frac{1343}{676} : \frac{222431}{17576} : 1 : 1\right)$. The elliptic curve $E=(\tilde{C}_{P_1},Q_1)$ is given by Weierstrass equation $$\gamma^2 = \xi^3 -2\cdot3^{4}\cdot5^2\cdot28368481,$$
and the point $D=\sigma(Q_1)+\sigma^2(Q_1)$ has infinite order on $E$; its $\xi$-coordinate is given by $$\xi_D=\frac{11\cdot33487\cdot
580020724757}{(2\cdot12\cdot167\cdot523)^2},$$ so $D$ has infinite order on $E$ by a result of Lutz and Nagel ([Corollary VIII.7.2]\cite{Sil09}). We conclude that the 3-section $C_{P_1}$ has infinitely many $k$-rational points.
Equivalently, we could have used the point $P_2$ to create a 3-section with infinitely many $k$-rational points: the curve $C_{P_2}$ is cut out from $S$ by $1452xz - 208y - 10324z^3 + 12w^3$; it has geometric genus 1, the third point of intersection of $C_{P_2}$ with the fiber $\mathcal{E}_{(1:1)}$ is given by $Q_2=\left(\frac{12793}{2704}:-\frac{2327053}{140608}:1:1\right),$ and the point $\sigma(Q_2)+\sigma^2(Q_2)$ again has infinite order on the elliptic curve $(\tilde{C}_{P_2},Q_2)$. We conclude that also $C_{P_2}$ has infinitely many $k$-rational points. 
\end{example}

\begin{example}\label{exsurface2}Let $k$ be a field of characteristic 0 and consider the surface $S$ given by $$y^2=x^3+243z^6+16w^6.$$ Note that this surface does not satisfy the conditions of \cite[Theorem 1.1]{VA11}, so the method there failed in this case \cite[Remark 7.4]{VA11}. Salgado and van Luijk made the observation that this surface contains the point $P=(0:4:0:1)$, which is 3-torsion on its fiber on the corresponding elliptic surface $\mathcal{E}$ (more generally, for $\beta\in k^*$, the elliptic curve of the form $y^2=x^3+\beta^2$ has the 3-torsion point $(0,\beta)$). However, this point is contained in 9 exceptional curves, so their method does not work with $P$. They did not find another point for which the computations were doable to show density of $S(k)$ \cite[Examples 7.3 and 4.4 (iii)]{SL14}. Finally, Elkies showed that the set $S(\mathbb{Q})$ is Zariski dense in $S$, by constructing a multisection with infinitely many rational points in the linear system $|{-3K}_S|$ that contains $P$ as a point of multiplicity 3 (this idea was generalized to any surface with a torsion point in the master thesis \cite{BvL}, though under the assumption that at least one of the infinitely many multisections constructed there has infinitely many rational points). \\
We prove the density of $S(k)$ in $S$ using Theorem \ref{thm1}: with \texttt{magma} we find that the fiber $\mathcal{E}_{(1:5)}$ above $(1:5)$ is smooth and has rank 2, so $S$ contains a point that lies on a smooth fiber of $\mathcal{E}$ and has infinite order (for example $P=(-63:-14:1:5)$), hence $S(k)$ is dense in $S$. 
\end{example}

\begin{example}\label{ex3}
Let $k$ be a field of characteristic 0, and let $S$ be the del Pezzo surface of degree 1 over $k$ given in $\P(2,3,1,1)$ by the equation

$$y^2=x^3+27z^6+16w^6.$$ For $k=\mathbb{Q}$, the rank of the Mordell--Weil group of the corresponding elliptic surface $\mathcal{E}$ is 2 \cite[Corollary 2.4]{DN}. In this example we illustrate that different cases in Proposition \ref{multisection} can happen on the same surface; we give a point $P$ on $S$ such that $C_P$ contains a section defined over $k$, and we give a point $Q$ on $S$ such that $C_Q$ is geometrically integral of genus~1. We have not found a point for which the corresponding curve is geometrically integral of genus 0 (see also Remark \ref{genus1}).\\
The point $P=(-3:-4:1:1)$ on $S$ corresponds to a non-torsion point on a smooth fiber of~$\mathcal{E}$. The curve $C_P$ is cut out from $S$ by the hypersurface $27xz + 8y + 81z^3 + 32w^3=0$. It is the union of a 2-section of genus 1 and a section of genus 0, both containing $P$. For $k=\mathbb{Q}$ we compute with \texttt{magma} that the 2-section is an elliptic curve of rank 3. The section is given by the curve $x+3z^2=y+4w^3=0$ in $\mathbb{P}(2,3,1,1)$, and it corresponds to the point $(-3t^2,-4)$ on the generic fiber of $\mathcal{E}$ over the function field $k(t)$ of $\mathbb{P}^1$, where we set $t=z/w$. This is the first case in Proposition \ref{multisection}.\\
By starting with the point $Q=(36:-220:2:1)$ on $S$, which also corresponds to a point on a smooth fiber of $\mathcal{E}$, we obtain the curve $C_Q$ cut out on $S$ by the surface $243xz + 55y - 675z^3 + 4w^3$. The curve $C_Q$ is now geometrically integral of genus 1. for $k=\mathbb{Q}$ we compute with \texttt{magma}, under the condition \texttt{SetClassGroupBounds("GRH")}, that its normalization is an elliptic curve of rank 4. 
\end{example}

\section*{Appendix. Un corollaire d'un th\'{e}or\`{e}me de Merel (par Jean-Louis Colliot-Th\'{e}l\`{e}ne)}

Un th\'eor\`eme bien connu de L. Merel \cite{Merel} borne la torsion des courbes elliptiques sur un corps de nombres $k$, et ce de fa\c con uniforme en fonction uniquement du degr\'e du corps de $k$ sur $\mathbb{Q}$. Je remarque qu'on en d\'eduit facilement une extension au cas des corps de type fini sur $\mathbb{Q}$.

On utilise le lemme bien connu suivant.

\begin{lemme} Soient $k$ un corps de caract\'eristique z\'ero, $Y$ une k-vari\'et\'e int\`egre et $f : X \longrightarrow Y$ une famille lisse de vari\'et\'es ab\'eliennes. Si la fibre g\'en\'erique de f poss\`ede un point exactement de $n$-torsion, alors pour tout point (sch\'ematique) $P$ du sch\'ema $Y$, la fibre $X_P/\kappa(P)$ poss\`ede un point exactement de $n$-torsion.\end{lemme}

\textit{D\'emonstration.} Pour tout entier $n$, le sch\'ema des points de $n$-torsion est fini \'etale sur $Y$. En particulier le sous-sch\'ema form\'e des sections d'ordre exactement $n$ est une union disjointe d'images de sections de $f$. 
CQFD

\vspace{11pt}

Voici l'extension du th\'eor\`eme de Merel.

\begin{theoreme} Soit k un corps de type fini sur $\Q$, soit $C$ une k-vari\'et\'e int\`egre, et soit $E/C$ une famille lisse de courbes elliptiques. Alors il existe un entier $N$ (d\'ependant de $k$) tel que, pour tout point $P\in C(k)$, l'ordre d'un point $k$-rationnel de torsion sur la fibre $E_P$ est au plus $N$.\end{theoreme}

\textit{D\'emonstration.} Le corps $k$ s'\'ecrit comme le corps des fractions d'une $\Q$-vari\'et\'e int\`egre $U=\rm{Spec}(A)$, qu'on peut choisir finie \'etale, d'un certain degr\'e $d$, sur un ouvert d'un espace affine $\mathbb{A}^r_{\mathbb{Q}}$. Quitte \`a restreindre $U$ on peut \'etendre la situation $E/C/k/\mathbb{Q}$ \`a $F/D/U/\mathbb{Q}$ avec $F/D$ famille de courbes elliptiques sur $D$. Un point $k$-rationnel $P$ de $C$ s'\'etend en une section $\tau_P : V \longrightarrow D$ de la projection $D \longrightarrow U$ sur un ouvert $V\subset U$ (ouvert d\'ependant de $P$) non vide. L'image r\'eciproque de $F \longrightarrow D$ au-dessus de $V$ via $\tau_P$ est une famille de courbes elliptiques dont la fibre g\'en\'erique est $E_P$. L'ensemble des points ferm\'es de $V$ de degr\'e au plus $d$ est Zariski dense dans $V$ (consid\'erer les images r\'eciproques des points de $\mathbb{A}^r(\mathbb{Q}))$, en particulier est non vide. Le th\'eor\`eme de Merel \cite{Merel} assure que l'ordre des points de torsion des courbes elliptiques sur un corps de nombres de degr\'e au plus $d$ est born\'e par un entier $N(d)$. Le lemme permet alors de conclure. CQFD

\vspace{11pt}

Si l'on note $\phi(d)$ la borne sur l'ordre d'un point de torsion donn\'ee par le th\'eor\`eme de Merel sur les corps de nombres de degr\'e au plus $d$ et si, pour $k$ de type fini sur $\mathbb{Q}$, on note $d_{min}(k)$ le degr\'e minimal de la pr\'esentation de $k$ comme extension finie $k/E$ d'une extension transcendante pure $E$ de $\mathbb{Q}$, alors on peut borner $N$ dans le th\'eor\`eme par $\phi(d_{min}(k))$.

\bibliographystyle{amsalpha}
\bibliography{DensityDP1}

\providecommand{\bysame}{\leavevmode\hbox to3em{\hrulefill}\thinspace}
\providecommand{\MR}{\relax\ifhmode\unskip\space\fi MR }
\providecommand{\MRhref}[2]{%
  \href{http://www.ams.org/mathscinet-getitem?mr=#1}{#2}
}
\providecommand{\href}[2]{#2}
\begin{thebibliography}{STVA14}

\bibitem[Bul18]{BvL}
J.~Bulthuis, \emph{Rational points on del {P}ezzo surfaces of degree one},
  2018, Master thesis, Universiteit Leiden, {A}vailable at
  {\url{https://www.universiteitleiden.nl/binaries/content/assets/science/mi/scripties/master/2017-2018/master-thesis-jelle-bulthuis.pdf}}.

\bibitem[CO99]{CO99}
P.~Cragnolini and P.A. Oliverio, \emph{Lines on del {P}ezzo surfaces with
  {$K_S^2=1$} in characteristic $\neq 2$}, Comm. Algebra \textbf{27} (1999),
  no.~3, 1197--1206.

\bibitem[Cod]{magmachapter2}
Magma Code, Lemma \ref{shortweierstrassandpoint}, {A}vailable at
  {\url{http://www.rosa-winter.com/MagmaDensityDP1.txt}}.

\bibitem[Coo88]{Co88}
K.~R. Coombes, \emph{Every rational surface is separably split}, Comment. Math.
  Helv. \textbf{63} (1988), no.~2, 305--311.

\bibitem[CT12]{CaTa}
A.~Cadoret and A.~Tamagawa, \emph{Uniform boundedness of {$p$}-primary torsion
  of abelian schemes}, Invent. Math. \textbf{188} (2012), no.~1, 83--125.

\bibitem[CTS80]{CTS}
J.-L. Colliot-Th\'{e}l\`ene and J.-J. Sansuc, \emph{La descente sur les
  vari\'{e}t\'{e}s rationnelles}, Journ\'{e}es de {G}\'{e}ometrie
  {A}lg\'{e}brique d'{A}ngers, {J}uillet 1979/{A}lgebraic {G}eometry, {A}ngers,
  1979, Sijthoff \& Noordhoff, Alphen aan den Rijn---Germantown, Md., 1980,
  pp.~223--237.

\bibitem[Des18]{Desjardins1}
J.~Desjardins, \emph{On the variation of the root number of the fibers of
  families of elliptic curves}, J. London Math. Soc. \textbf{99} (2018), no.~2,
  1--37.

\bibitem[Des19]{Desjardins2}
\bysame, \emph{On the density of the rational points on rational elliptic
  surfaces}, Acta Arith. \textbf{189} (2019), no.~2, 109--146.

\bibitem[Des21]{Desjardins5}
\bysame, \emph{Root number in integer parameter families of elliptic curves},
  Ann. Math. Qu{\'e}bec (2021).

\bibitem[DN]{DN}
J.~Desjardins and B.~Naskr{\c e}cki, \emph{Geometry of the del {P}ezzo surface
  $y^2 = x^3 + am^6 + bn^6$}, {P}reprint available at arXiv:1911.02684.

\bibitem[Hel]{Helfgott}
H.~A. Helfgott, \emph{On the behaviour of root numbers in families of elliptic
  curves}, {P}reprint available at arXiv:math/0408141v3.

\bibitem[Jab12]{J12}
E.~Jabara, \emph{Rational points on some elliptic surfaces}, Acta Arith.
  \textbf{153} (2012), no.~1, 93--108.

\bibitem[KM17]{KM17}
J.~Koll\'{a}r and M.~Mella, \emph{Quadratic families of elliptic curves and
  unirationality of degree 1 conic bundles}, Amer. J. Math. \textbf{139}
  (2017), no.~4, 915--936.

\bibitem[Kol02]{Ko02}
J.~Koll\'{a}r, \emph{Unirationality of cubic hypersurfaces}, J. Inst. Math.
  Jussieu \textbf{1} (2002), no.~3, 467--476.

\bibitem[Man86]{Ma86}
Yu.~I. Manin, \emph{Cubic forms}, second ed., North-Holland Mathematical
  Library, vol.~4, North-Holland Publishing Co., Amsterdam, 1986, Algebra,
  geometry, arithmetic, Translated from the Russian by M. Hazewinkel.

\bibitem[Man95]{Manduchi}
E.~Manduchi, \emph{Root numbers of fibers of elliptic surfaces}, Compositio
  Math. \textbf{99} (1995), no.~1, 33--58. \MR{1352567}

\bibitem[Mer96]{Merel}
L.~Merel, \emph{Bornes pour la torsion des courbes elliptiques sur les corps de
  nombres}, Invent. Math. \textbf{124} (1996), no.~1-3, 437--449.

\bibitem[Pie12]{Pie}
M.~Pieropan, \emph{On the unirationality of del {P}ezzo surfaces over an
  arbitrary field}, 2012, {A}lgant master thesis, Concordia University and
  Universit\'{e} Paris-Sud 11, {A}vailable at
  {\url{https://spectrum.library.concordia.ca/974864/6/Pieropan_MA_F2012.pdf}}.

\bibitem[Seg43]{Se43}
B.~Segre, \emph{A note on arithmetical properties of cubic surfaces}, J. London
  Math. Soc. \textbf{18} (1943), no.~1, 24--31.

\bibitem[Seg51]{Se51}
\bysame, \emph{On the rational solutions of homogeneous cubic equations in four
  variables}, Math. Notae \textbf{11} (1951), 1--68.

\bibitem[Sil09]{Sil09}
J.~H. Silverman, \emph{The arithmetic of elliptic curves}, second ed., Graduate
  Texts in Mathematics, vol. 106, Springer, Dordrecht, 2009.

\bibitem[SS19]{MW19}
M.~Sch\"{u}tt and T.~Shioda, \emph{Mordell-{W}eil lattices}, Ergebnisse der
  Mathematik und ihrer Grenzgebiete. 3. Folge. A Series of Modern Surveys in
  Mathematics, vol.~70, Springer, Singapore, 2019.

\bibitem[STVA14]{STVA}
C.~Salgado, D.~Testa, and A.~V\'{a}rilly-Alvarado, \emph{On the unirationality
  of del {P}ezzo surfaces of degree 2}, J. Lond. Math. Soc. (2) \textbf{90}
  (2014), no.~1, 121--139.

\bibitem[SvL14]{SL14}
C.~Salgado and R.~van Luijk, \emph{Density of rational points on del {P}ezzo
  surfaces of degree one}, Adv. Math. \textbf{261} (2014), 154--199.

\bibitem[Ula07]{U07}
M.~Ulas, \emph{Rational points on certain elliptic surfaces}, Acta Arith.
  \textbf{129} (2007), no.~2, 167--185.

\bibitem[Ula08]{U08}
\bysame, \emph{Rational points on certain del {P}ezzo surfaces of degree one},
  Glasg. Math. J. \textbf{50} (2008), no.~3, 557--564.

\bibitem[UT10]{UT10}
M.~Ulas and A.~Togb\'{e}, \emph{On the {D}iophantine equation {$z^2=f(x)^2\pm
  f(y)^2$}}, Publ. Math. Debrecen \textbf{76} (2010), no.~1-2, 183--201.

\bibitem[VA08]{VA08}
A.~V\'arilly-Alvarado, \emph{Weak approximation on del {P}ezzo surfaces of
  degree 1}, Adv. Math. \textbf{219} (2008), no.~6, 2123--2145.

\bibitem[VA09]{VA09}
\bysame, \emph{Arithmetic of del {P}ezzo surfaces of degree 1}, 2009, PhD
  thesis, University of California at Berkeley.

\bibitem[VA11]{VA11}
\bysame, \emph{Density of rational points on isotrivial rational elliptic
  surfaces}, Algebra Number Theory \textbf{5} (2011), no.~5, 659--690.

\end{thebibliography}

\end{document}